\newtheorem{theorem}{Theorem}
\newtheorem{lemma}{Lemma}
\newtheorem{remark}{Remark}
\newbox\qedbox
\newenvironment{proof}{\smallskip\noindent{\bf Proof.}\hskip \labelsep}%
                        {\hfill\penalty10000\copy\qedbox\par\medskip}
\newcommand{\bfR}{{\Bbb R}}
\newcommand{\bfC}{{\Bbb C}}
\newcommand{\bfZ}{{\Bbb Z}}
\newcommand{\ii}{\text{i}}
\newcommand{\e}{\text{e}}
\newcommand{\dd}{\text{d}}
\newcommand{\Om}{\Omega}
\newcommand{\nn}{\nonumber}
\newcommand\be{\begin{equation}}
\newcommand\ee{\end{equation}}
\newcommand{\bea}{\begin{eqnarray}}
\newcommand{\eea}{\end{eqnarray}}
\newcommand\berr{\begin{eqnarray*}}
\newcommand\eerr{\end{eqnarray*}}
\begin{document}

\title{The Gerdjikov-Ivanov type derivative nonlinear Schr\"odinger equation: Long-time
dynamics of nonzero boundary conditions}
\author{ Boling Guo$^{a}$,\, Nan Liu$^{b,}$\footnote{Corresponding author.}\\
$^a${\small{\em Institute of Applied Physics and Computational Mathematics,  Beijing 100088, P.R. China}} \\
$^b${\small{\em The Graduate School of China Academy of Engineering Physics, Beijing 100088, P.R. China}}\\
\setcounter{footnote}{-1}\footnote{E-mail address:  ln10475@163.com (N. Liu).} \\
}

\date{}
\maketitle

\begin{quote}
{{{\bfseries Abstract.} We consider the Gerdjikov--Ivanov type derivative nonlinear Schr\"odinger equation
\berr
\ii q_{t}+q_{xx}-\ii q^2\bar{q}_{x}+\frac{1}{2}(|q|^4-q_0^4)q=0
\eerr
on the line. The initial value $q(x,0)$ is given and satisfies the symmetric, nonzero boundary conditions at infinity, that is, $q(x,0)\rightarrow q_\pm$ as $x\rightarrow\pm\infty$, and $|q_\pm|=q_0>0$. The goal of this paper is to study the asymptotic
behavior of the solution of this initial-value problem as $t\rightarrow\infty$. The main tool is the asymptotic analysis of an associated matrix Riemann--Hilbert problem by using the steepest descent method and the so-called $g$-function mechanism. We show that the solution $q(x,t)$ of this initial value problem has a different asymptotic behavior in different regions of the $xt$-plane. In the regions $x<-2\sqrt{2}q_0^2t$ and $x>2\sqrt{2}q_0^2t$, the solution takes the form of a plane wave. In the region $-2\sqrt{2}q_0^2t<x<2\sqrt{2}q_0^2t$, the solution takes the form of a modulated elliptic wave.

}

 {\bf Keywords:} Gerdjikov--Ivanov type derivative nonlinear Schr\"odinger equation; Riemann--Hilbert problem; Long-time asymptotics; Nonlinear steepest descent method.}
\end{quote}

\section{Introduction}
\setcounter{equation}{0}

The celebrated nonlinear Schr\"odinger (NLS) equation has been recognized as a ubiquitous mathematical model among many integrable systems, which governs weakly nonlinear and dispersive wave packets in one-dimensional physical systems. It plays an important role in nonlinear optics \cite{AS}, water waves \cite{BN}, and Bose--Einstein condensates \cite{WL}. Another integrable system of NLS type, the derivative-type NLS equation
\be\label{1.1}
\ii u_t+u_{xx}-\ii u^2\bar{u}_x+\frac{1}{2}|u|^4u=0,
\ee
is derived by Gerdjikov--Ivanov \cite{GI}, which is called the GI equation. Here and below, the bar refers to the complex conjugate. The GI equation can be regarded as an extension of the NLS when certain higher-order nonlinear effects are taken into account. In recent years, there has been much work on the GI equation, such as its Darboux transformation and Hamiltonian structures \cite{FAN1,FAN2}, the algebra-geometric solutions \cite{DF}, the rogue wave and breather solution \cite{XH}. Particularly, the long-time asymptotic behavior of solution to the GI equation \eqref{1.1} was established in \cite{JX2,ST} by using the nonlinear steepest descent method.

In particular, via the transformation $u(x,t)=q(x,t)\e^{\frac{1}{2}\ii q_0^4t}$, equation \eqref{1.1} is trivially converted into
\be
\ii q_t+q_{xx}-\ii q^2\bar{q}_x+\frac{1}{2}(|q|^4-q_0^4)q=0.\label{1.2}
\ee
Our purpose in the present work is devoted to the study of the long-time asymptotics of equation \eqref{1.2} on the line with symmetric, nonzero boundary conditions at infinity, that is,
\be\label{1.3}
\lim_{x\rightarrow\pm\infty}q(x,0)=q_\pm.
\ee
Hereafter, $q_\pm$ are complex constants and $|q_\pm|=q_0>0$.
This form of equation \eqref{1.2} has the advantage that the solutions of \eqref{1.2} which satisfy \eqref{1.3} are asymptotically time independent as $x\rightarrow\infty.$ The problems with nonzero boundary conditions at infinity of the kind \eqref{1.3} have already been studied. For example, the inverse scattering transform (IST) and the long-time asymptotics for the focusing NLS equation with nonzero boundary conditions at infinity were developed recently in \cite{GB2} and \cite{GB1}, respectively. Furthermore, for the multi-component case, the initial value problem for the focusing Manakov system with nonzero boundary conditions at infinity is solved in \cite{KB} by developing an appropriate IST. The three-component defocusing NLS equation with nonzero boundary conditions was analyzed in \cite{BK} by the theory of IST. On the other hand, for the asymmetric non-zero boundary conditions (i.e., when the limiting values of the solution at space infinities have different non-zero moduli), the IST for the focusing and defocusing NLS equation were formulated in \cite{FD} and \cite{GB3}, respectively.

Our present work was motivated by the long-time asymptotic analysis for the focusing NLS equation developed in \cite{GB1}. Our goal here is to compute the long-time asymptotics for the GI-type derivative NLS equation \eqref{1.2} with the initial value condition satisfied \eqref{1.3}. The main tool is the asymptotic analysis of an associated matrix Riemann--Hilbert (RH) problem by using the steepest descent method and the so-called $g$-function mechanism \cite{PD2}. The well-known nonlinear steepest descent method introduced by Deift and Zhou in \cite{PD} provides a detailed rigorous proof to calculate the large-time asymptotic behaviors of the integrable nonlinear evolution equations. This approach has been successfully applied in determining asymptotic formulas for the initial value problems of a number of integrable systems associated with $2\times2$ matrix sprectral problems including the mKdV equation \cite{PD}, the KdV equation \cite{GT}, the Hirota equation \cite{HL}, the derivative NLS equation \cite{LP}, the Fokas--Lenells equation \cite{XJ} and the Kundu--Eckhaus equation \cite{DS}. For the $3\times3$ matrix spectral problem, the large-time asymptotic behavior for the coupled NLS equations was obtained in \cite{XG} via nonlinear steepest descent. Moreover, there are also many beautiful results about the study of asymptotics of solutions of the initial value problems with shock-type oscillating initial data \cite{RB}, nondecaying step-like initial data \cite{AB3,JX2,KM} and the initial-boundary value problems with $t$-periodic boundary condition \cite{AB,ST} for various integrable equations. Recently, Lenells also has been derived some interesting asymptotic formulas for the solution of integrable equations on the half-line \cite{JL1,JL2} by using the steepest descent method. We also have done some meaningful work about determining the long-time asymptotics for integrable equations on the half-line, see \cite{GL1,GL2}.

The organization of the paper is as follows. In Section 2, we formulate the main Riemann--Hilbert problem to solve the initial value problem for the GI-type derivative NLS equation \eqref{1.2} with nonzero boundary conditions. We then use this RH problem to compute the long-time asymptotic behavior of the solution in different regions of the $xt$-plane in Section 3.

\section{The Riemann--Hilbert problem}
\setcounter{equation}{0}
\setcounter{lemma}{0}
\setcounter{theorem}{0}
In this section, we will use the approach proposed in \cite{GB1} to formulate the main RH problem, which allows
us to give a representation of the solution for the equation \eqref{1.2}.
\subsection{Eigenfunctions}
The integrability of equation \eqref{1.2} follows from its Lax pair representation
\bea
\Psi_x&=&X\Psi,\quad\Psi_t=T\Psi,\label{2.1}
\eea
where
\bea
X&=&\ii k^2\sigma_3+\ii kQ-\frac{\ii}{2}Q^2\sigma_3,\label{2.2}\\
T&=&-2\ii k^4\sigma_3-2\ii k^3Q+\ii k^2Q^2\sigma_3+kQ_x\sigma_3+\frac{\ii}{4}(Q^4-q_0^4I)\sigma_3+\frac{1}{2}(Q_xQ-QQ_x),\label{2.3}
\eea
$\Psi(x,t;k)$ is a vector or a $2\times2$ matrix-valued function and $k\in\bfC$ is the spectral parameter, and
\be
\sigma_3={\left( \begin{array}{cc}
1 ~& 0 \\
0 ~& -1 \\
\end{array}
\right )},\quad Q=Q(x,t)={\left( \begin{array}{cc}
0 ~& q(x,t) \\
-\bar{q}(x,t) ~& 0 \\
\end{array}
\right )}.
\ee
Let $X_\pm=\lim_{x\rightarrow\pm\infty}X(x,t;k)$ and $T_\pm=\lim_{x\rightarrow\pm\infty}T(x,t;k)$. It is straightforward to see that the eigenvector matrix of $X_\pm$ can be written as
\be\label{2.5}
E_\pm=\begin{pmatrix}
1 ~& \frac{\lambda-(k^2+\frac{1}{2}q_0^2)}{k\bar{q}_\pm}\\[4pt]
\frac{\lambda-(k^2+\frac{1}{2}q_0^2)}{kq_\pm} ~& 1
\end{pmatrix},
\ee
while the corresponding eigenvalues $\pm\ii\lambda$ are defined by
\be
\lambda(k)=\bigg(k^4+\frac{1}{4}q_0^4\bigg)^{\frac{1}{2}}.\label{2.6}
\ee
The branch cut for $\lambda(k)$ is taken along the segment
\be
\varsigma\cup\bar{\varsigma}=\{k\in\bfC|k_1^2-k_2^2=0,~|k_1k_2|\leq\frac{1}{4}q_0^2\},
\ee
where $\varsigma=\{k\in\bfC|k_1^2-k_2^2=0,~|k_1k_2|\leq\frac{1}{4}q_0^2,~\text{Im}k^2>0\},~k_1=\text{Re}k,~k_2=\text{Im}k.$

On the other hand, one obtains that $T_\pm=-2k^2X_\pm$, we seek simultaneous solution $\Psi_\pm$ of Lax pair \eqref{2.1} such that
\be\label{2.8}
\Psi_\pm(x,t;k)=E_\pm(k)\e^{\ii\Phi(x,t;k)\sigma_3}(1+o(1)),\quad x\rightarrow\pm\infty,
\ee
where \be
\Phi(x,t;k)=\lambda(x-2k^2t).
\ee
We also find that for any $t\geq0$, $\Psi_\pm(x,t;k)$ remain bounded as $x\rightarrow\pm\infty$ if and only if $k\in\Sigma_0=\bfR\cup\ii\bfR\cup\varsigma\cup\bar{\varsigma}$. The contour $\Sigma_0$ can be reduced to the contour $\Sigma=\bfR\cup\gamma\cup\bar{\gamma}$ in the $k^2$-plane as shown in Fig. \ref{fig1}. Introducing a new eigenfunction $\mu(x,t;k)$ by
\begin{equation}\label{2.10}
\mu_\pm(x,t;k)=\Psi_\pm(x,t;k) \e^{-\ii\Phi(x,t;k)\sigma_3},
\end{equation}
we obtain that
\begin{equation}\label{2.11}
\mu_\pm(x,t;k)=E_\pm(k)+o(1),\quad x\rightarrow\pm\infty,~k\in\Sigma_0.
\end{equation}
\begin{figure}[htbp]
  \centering
  \includegraphics[width=3in]{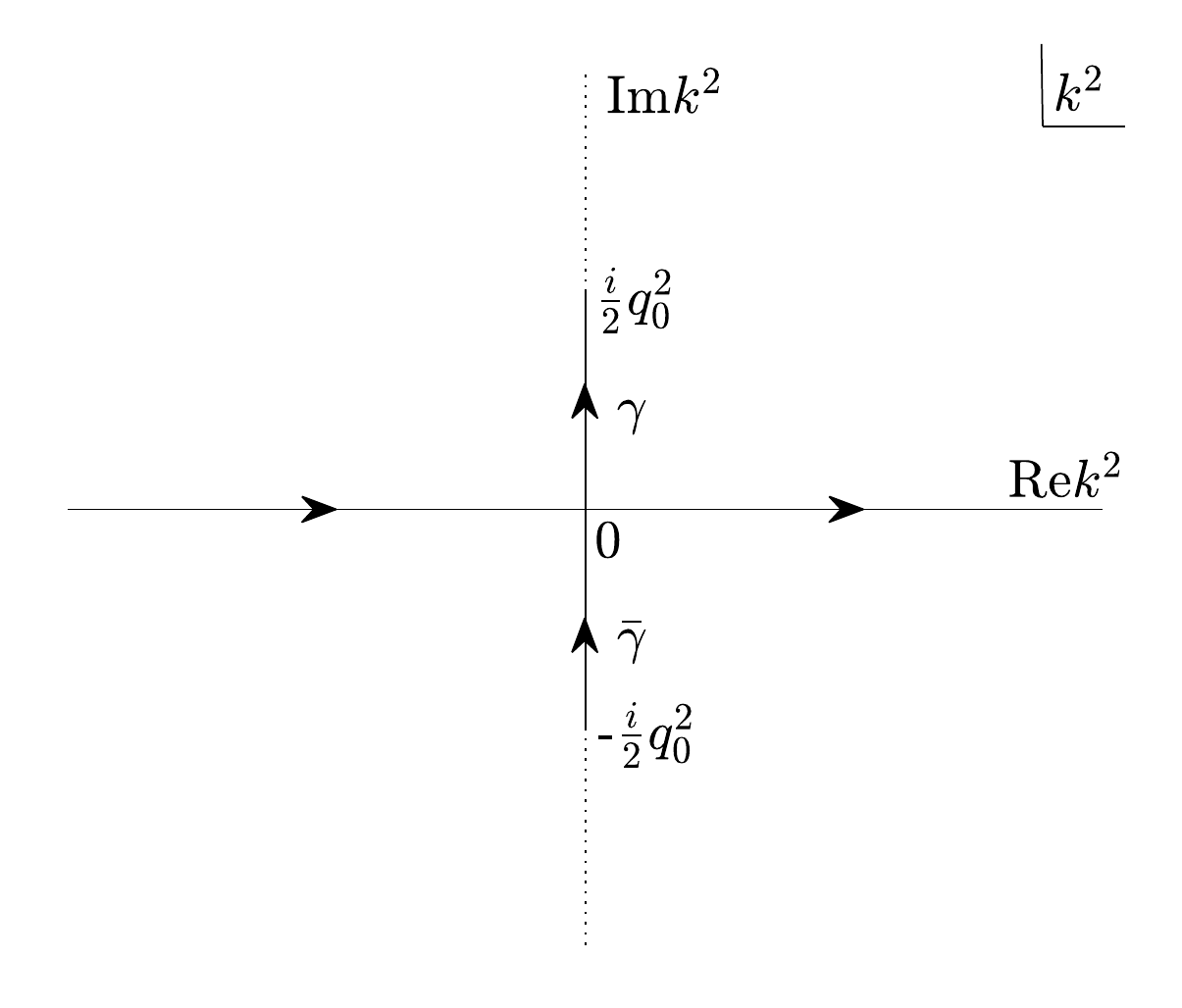}
  \caption{The oriented contour $\Sigma=\bfR\cup\gamma\cup\bar{\gamma}$ in $k^2$-plane.}\label{fig1}
\end{figure}

Using the method of variation of parameters, we get the following Volterra integral equations for $\mu_\pm$
\bea
\mu_+(x,t;k)&=&E_+(k)+\int_{+\infty}^xE_+(k)\e^{\ii\lambda(x-y)\sigma_3}E^{-1}_+(k)(\triangle X_+\mu_+)(y,t;k)\e^{-\ii\lambda(x-y)\sigma_3}\dd y,\label{2.12}\\
\mu_-(x,t;k)&=&E_-(k)+\int_{-\infty}^xE_-(k)\e^{\ii\lambda(x-y)\sigma_3}E^{-1}_-(k)(\triangle X_-\mu_-)(y,t;k)\e^{-\ii\lambda(x-y)\sigma_3}\dd y,\label{2.13}
\eea
where $\triangle X_\pm(x,t;k)=X(x,t;k)-X_\pm=\ii k(Q-Q_\pm)-\frac{\ii}{2}(Q^2+q_0^2I)\sigma_3$ and $Q_\pm=\lim_{x\rightarrow\pm\infty}Q(x,t)$.
Assuming $(q(x,t)-q_\pm)\in L^{1,1}(\bfR^\pm_x)$, and $(|q(x,t)|^2-q_0^2)\in L^{1,1}(\bfR^\pm_x)$,
where
\berr
L^{1,1}(\bfR^\pm)=\bigg\{f:\bfR\rightarrow\bfC\bigg|\int_{\bfR^\pm}(1+|x|)|f(x)|\dd x<\infty\bigg\},
\eerr
the analysis of the Neumann series for the integral equations \eqref{2.12} and \eqref{2.13} allows one to prove existence and uniqueness of the eigenfunctions $\mu_\pm$ for all $k\in\Sigma_0$ (the detailed proof can be founded \cite{GB3,GB2}). We denote by $\mu_{\pm}^{(1)}(x,t;k)$ and $\mu_{\pm}^{(2)}(x,t;k)$ the columns of $\mu_\pm(x,t;k)$. Then we have the following lemma.
\begin{lemma}\label{lem2.1}
For all $(x,t)$, the matrices $\mu_\pm(x,t;k)$ have the following properties:

(i) The determinants of $\mu_\pm(x,t;k)$ satisfy
\be\label{2.14}
\det\mu_\pm(x,t;k)=\det E_\pm(k)=\frac{2\lambda}{\lambda+k^2+\frac{1}{2}q_0^2}\triangleq d(k).
\ee

(ii) $\mu^{(1)}_+$ and $\mu^{(2)}_-$ are analytic in $\{k\in\bfC|\mbox{Im}k^2>0\}\setminus\gamma$, and $(\mu^{(1)}_+~\mu^{(2)}_-)\rightarrow I$ as $k\rightarrow\infty.$

(iii) $\mu^{(1)}_-$ and $\mu^{(2)}_+$ are analytic in $\{k\in\bfC|\mbox{Im}k^2<0\}\setminus\bar{\gamma}$, and $(\mu^{(1)}_-~\mu^{(2)}_+)\rightarrow I$ as $k\rightarrow\infty.$

(iv)  Symmetry:
\bea
\overline{\mu_\pm(x,t;\bar{k})}&=&\begin{pmatrix}
0 & 1\\
1 & 0\\
\end{pmatrix}\mu_\pm(x,t;k)\begin{pmatrix}
0 & 1\\
1 & 0\\
\end{pmatrix},\label{2.15}\\
\mu_\pm(x,t;-k)&=&\sigma_3\mu_\pm(x,t;k)\sigma_3.\label{2.16}
\eea

(v) Moreover,
\berr
\mu_\pm(x,t;k)=I+\frac{\tilde{\mu}(x,t)}{k}+O(k^{-1}),\quad k\rightarrow\infty,
\eerr
 where
\berr
[\sigma_3,\tilde{\mu}(x,t)]=\begin{pmatrix}
0 ~& -q(x,t)\\[4pt]
\bar{q}(x,t) ~& 0
\end{pmatrix}.
\eerr
\end{lemma}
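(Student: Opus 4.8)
The plan is to read all five items off the Volterra integral equations \eqref{2.12}--\eqref{2.13}, the normalization \eqref{2.11}, and the algebraic structure of $X$, $T$, $E_\pm$, $\lambda$ and $\Phi$; no analytic input beyond the Neumann-series estimates already used to construct $\mu_\pm$ is needed. For (i) I would first note that $X(x,t;k)$ in \eqref{2.2} is traceless, since $\sigma_3$ and $Q$ have zero trace and $Q^2=-|q|^2I$ makes $Q^2\sigma_3$ traceless too; by Abel's identity $\det\Psi_\pm$ is then independent of $x$, and since $\det\e^{-\ii\Phi\sigma_3}=1$ so is $\det\mu_\pm$. Letting $x\to\pm\infty$ and using \eqref{2.11} gives $\det\mu_\pm=\det E_\pm$. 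The closed form is a short computation: from \eqref{2.6}, $\lambda^2-(k^2+\tfrac12 q_0^2)^2=-k^2q_0^2$, hence $\lambda-(k^2+\tfrac12q_0^2)=-k^2q_0^2/(\lambda+k^2+\tfrac12q_0^2)$, and feeding this into $\det E_\pm=1-(\lambda-(k^2+\tfrac12q_0^2))^2/(k^2q_0^2)$ collapses it to $2\lambda/(\lambda+k^2+\tfrac12q_0^2)=d(k)$.

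For (ii)--(iii) I would track the exponential factors: conjugating the integrand in \eqref{2.12}--\eqref{2.13} by $\e^{\ii\lambda(x-y)\sigma_3}$ multiplies the two columns by $1$ and $\e^{\mp2\ii\lambda(x-y)}$. On the integration ray for $\mu_+$ one has $x-y\le0$, so the first column carries $\e^{-2\ii\lambda(x-y)}$, which is bounded precisely when $\mathrm{Im}\,\lambda\ge0$; for the branch of $\lambda$ cut along $\varsigma\cup\bar\varsigma$ this is the region $\{k\in\bfC\,|\,\mathrm{Im}\,k^2>0\}\setminus\gamma$. The Neumann series for $\mu_+^{(1)}$ then converges absolutely and locally uniformly there — this is the step that uses $(q-q_\pm)\in L^{1,1}(\bfR^\pm_x)$ and $(|q|^2-q_0^2)\in L^{1,1}(\bfR^\pm_x)$ — giving its analyticity, and the symmetric discussion gives $\mu_-^{(2)}$ in the same half-plane and $\mu_-^{(1)},\mu_+^{(2)}$ in the lower one. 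For the normalization, $E_\pm(k)\to I$ as $k\to\infty$, because $\lambda-(k^2+\tfrac12q_0^2)\to-\tfrac12q_0^2$ forces the off-diagonal entries of $E_\pm$ to be $O(1/k)$, and the integral terms vanish in the limit, so the two mixed matrices tend to $I$.

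Item (iv) I would obtain by the standard symmetry-and-uniqueness argument. With $\sigma_1=\begin{pmatrix}0&1\\1&0\end{pmatrix}$, the identities $\sigma_3Q\sigma_3=-Q$, $\sigma_1Q\sigma_1=-\bar Q$, $Q^2=-|q|^2I$ give $\overline{X(x,t;\bar k)}=\sigma_1X(x,t;k)\sigma_1$ and $X(x,t;-k)=\sigma_3X(x,t;k)\sigma_3$, with the same relations for $T$; combined with $\overline{\lambda(\bar k)}=\lambda(k)$, $\lambda(-k)=\lambda(k)$ and the corresponding identities for $E_\pm$ and $\Phi$, they show that $\sigma_1\overline{\mu_\pm(x,t;\bar k)}\sigma_1$ and $\sigma_3\mu_\pm(x,t;-k)\sigma_3$ each solve the same Volterra equation \eqref{2.12}--\eqref{2.13} as $\mu_\pm(x,t;k)$; uniqueness then forces \eqref{2.15}--\eqref{2.16}. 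For (v) I would differentiate \eqref{2.10} and use \eqref{2.1} to get $\mu_x=X\mu-\ii\lambda\mu\sigma_3=(X-\ii\lambda\sigma_3)\mu+\ii\lambda[\sigma_3,\mu]$; inserting $\mu_\pm=I+\tilde\mu/k+O(k^{-2})$ and $\lambda=k^2+O(k^{-2})$ makes the left side $O(k^{-1})$, and matching the coefficient of $k$ on the right yields $\ii Q+\ii[\sigma_3,\tilde\mu]=0$, i.e. $[\sigma_3,\tilde\mu]=-Q=\begin{pmatrix}0&-q\\\bar q&0\end{pmatrix}$.

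The one genuinely delicate point is (ii)--(iii): one must carefully follow the exponential factors in the Neumann series along the non-compact integration rays while simultaneously fixing the branch of $\lambda$ so that $\mathrm{sgn}(\mathrm{Im}\,\lambda)=\mathrm{sgn}(\mathrm{Im}\,k^2)$ away from the cut $\varsigma\cup\bar\varsigma$. Once this bookkeeping is in place the convergence estimates are routine and parallel to those in \cite{GB2,GB3}; items (i), (iv) and (v) are then purely algebraic consequences of the Lax-pair structure.
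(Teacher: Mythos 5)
Your proposal is correct and follows essentially the same route the paper intends: the paper itself only sketches this lemma (deferring the Neumann-series/Volterra analysis to the cited references and noting in Remark 2.1 that (iv) follows from the symmetries of $X$ and (v) from the $x$-part of the Lax pair together with $\lambda(k)=k^{2}+O(k^{-2})$), and your tracking of the exponential factors, the Abel/trace argument with the explicit computation $\det E_\pm=2\lambda/(\lambda+k^{2}+\tfrac12 q_0^{2})$, and the coefficient matching giving $[\sigma_3,\tilde{\mu}]=-Q$ are exactly the standard arguments being invoked.
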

\begin{remark}
The definition \eqref{2.6} of $\lambda$ implies that $d$ is nonzero and nonsingular for all $k^2\in\Sigma^*=\Sigma\setminus\{ \pm\frac{\ii}{2}q_0^2\}$. The symmetry relations \eqref{2.15} and \eqref{2.16} can be proved easily due to the symmetries of the matrix $X(x,t;k)$:
$$\overline{X(x,t;\bar{k})}=\begin{pmatrix}
0 & 1\\
1 & 0\\
\end{pmatrix}X(x,t;k)\begin{pmatrix}
0 & 1\\
1 & 0\\
\end{pmatrix},~X(x,t;-k)=\sigma_3X(x,t;k)\sigma_3.$$
The large $k$ asymptotics of $\mu_\pm(x,t;k)$ can be obtained from the $x$-part of Lax pair \eqref{2.1} and \eqref{2.10} as well as the asymptotics of $\lambda(k)$, that is, $\lambda(k)=k^2+O(k^{-2})$ as $k\rightarrow\infty$.
\end{remark}

Moreover, any two solutions of \eqref{2.1} are related, thus we define the spectral matrix $s(k)$ by
\be\label{2.17}
\mu_-(x,t;k)=\mu_+(x,t;k)\e^{\ii\Phi(x,t;k)\sigma_3}s(k)\e^{-\ii\Phi(x,t;k)\sigma_3},\quad k^2\in\Sigma^*.
\ee
In particular, it follows from \eqref{2.14} that
\be\label{2.18}
\det s(k)=1.
\ee
Due to the symmetry relation \eqref{2.15}, the matrix-valued spectral functions $s(k)$ can be  defined in terms of two scalar spectral functions, $a(k)$ and $b(k)$ by
\be\label{2.19}
s(k)=\begin{pmatrix}
\bar{a}(k) & b(k)\\[4pt]
\bar{b}(k) & a(k)\\
\end{pmatrix},\quad a(k)\bar{a}(k)-b(k)\bar{b}(k)=1,
\ee
where $\bar{a}(k)=\overline{a(\bar{k})}$ and $\bar{b}(k)=\overline{b(\bar{k})}$ indicate the Schwartz conjugates.
Meanwhile, by the symmetry relation \eqref{2.16}, one can infer that
\be\label{2.20}
a(k)=a(-k),\quad b(k)=-b(-k).
\ee
Finally, equations \eqref{2.10}, \eqref{2.17} and \eqref{2.19} imply
\bea
a(k)&=&\frac{1}{d(k)}\text{wr}(\Psi_+^{(1)}(x,t;k),\Psi_-^{(2)}(x,t;k)),\label{2.21}\\
b(k)&=&\frac{1}{d(k)}\text{wr}(\Psi_-^{(2)}(x,t;k),\Psi_+^{(2)}(x,t;k)),\label{2.22}\\
\bar{a}(k)&=&\frac{1}{d(k)}\text{wr}(\Psi_-^{(1)}(x,t;k),\Psi_+^{(2)}(x,t;k)),\label{2.23}\\
\bar{b}(k)&=&\frac{1}{d(k)}\text{wr}(\Psi_+^{(1)}(x,t;k),\Psi_-^{(1)}(x,t;k)),\label{2.24}
\eea
where wr denotes the Wronskian determinant. Thus, we can find that $a(k)$ is analytic in $\{k\in\bfC|\mbox{Im}k^2>0\}\setminus\gamma$.

The jump discontinuity of $\lambda$ across $\gamma\cup\bar{\gamma}$ induces a corresponding jump for the eigenfunctions and scattering data.
\begin{lemma}\label{lem2.2}
The columns of fundamental solutions $\Psi(x,t;k)$ and scattering data $a(k)$, $b(k)$ satisfy the following
jump conditions across $\gamma\cup\bar{\gamma}$:
\bea
\begin{aligned}
(\Psi_+^{(1)})^+(x,t;k)&=-\frac{\lambda+k^2+\frac{1}{2}q_0^2}{kq_+}\Psi_+^{(2)}(x,t;k),\\
(\Psi_-^{(2)})^+(x,t;k)&=-\frac{\lambda+k^2+\frac{1}{2}q_0^2}{k\bar{q}_-}\Psi_-^{(1)}(x,t;k),
\end{aligned}\quad k^2\in\gamma,\label{2.25}\\
\begin{aligned}
(\Psi_-^{(1)})^+(x,t;k)&=-\frac{\lambda+k^2+\frac{1}{2}q_0^2}{kq_-}\Psi_-^{(2)}(x,t;k),\\
(\Psi_+^{(2)})^+(x,t;k)&=-\frac{\lambda+k^2+\frac{1}{2}q_0^2}{k\bar{q}_+}\Psi_+^{(1)}(x,t;k),
\end{aligned}\quad k^2\in\bar{\gamma},
\eea
and
\be\label{2.27}
a^+(k)=\frac{q_-}{q_+}\bar{a}(k),\quad k^2\in\gamma.
\ee
\end{lemma}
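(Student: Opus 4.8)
The plan is to obtain every jump relation in Lemma~\ref{lem2.2} from a single source: since $\lambda^2=k^4+\frac{1}{4}q_0^4$ is entire, the two boundary values of $\lambda$ on the cut $\gamma\cup\bar{\gamma}$ differ only by a sign, so crossing the cut amounts to the substitution $\lambda\mapsto-\lambda$ in every quantity built from $\lambda$. To exploit this, put $\nu:=k^2+\frac{1}{2}q_0^2$; then \eqref{2.6} gives $\lambda^2-\nu^2=-k^2q_0^2$, i.e. $(\lambda-\nu)(\lambda+\nu)=-k^2q_0^2$. Using this identity together with $q_\pm\bar{q}_\pm=q_0^2$, a short computation shows that the eigenvector matrix \eqref{2.5} obeys
\[
E_\pm(-\lambda)=E_\pm(\lambda)\,C_\pm(k),\qquad
C_\pm(k):=\begin{pmatrix}0 & -\dfrac{\lambda+\nu}{k\bar{q}_\pm}\\[6pt] -\dfrac{\lambda+\nu}{kq_\pm} & 0\end{pmatrix},
\]
so that switching the sheet of $\lambda$ interchanges the two columns of $E_\pm$ up to exactly the scalar factors appearing in \eqref{2.25}. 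Moreover $\Phi(x,t;-\lambda)=-\Phi(x,t;\lambda)$, and since $C_\pm$ is off-diagonal it anticommutes with $\sigma_3$, so $C_\pm\e^{-\ii s\sigma_3}=\e^{\ii s\sigma_3}C_\pm$ for every scalar $s$; this yields both $E_\pm(-\lambda)\e^{-\ii\Phi\sigma_3}=E_\pm(\lambda)\e^{\ii\Phi\sigma_3}C_\pm$ and $E_\pm(-\lambda)\e^{-\ii\lambda(x-y)\sigma_3}E_\pm^{-1}(-\lambda)=E_\pm(\lambda)\e^{\ii\lambda(x-y)\sigma_3}E_\pm^{-1}(\lambda)$.

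For the eigenfunctions, I would rewrite \eqref{2.12} in terms of $\Psi_+=\mu_+\e^{\ii\Phi\sigma_3}$: using $\Phi(x)=\lambda(x-y)+\Phi(y)$ the trailing exponential is absorbed and one gets
\[
\Psi_+(x,t;k)=E_+(k)\e^{\ii\Phi\sigma_3}+\int_{+\infty}^x E_+(k)\e^{\ii\lambda(x-y)\sigma_3}E_+^{-1}(k)\,(\triangle X_+\Psi_+)(y,t;k)\,\dd y .
\]
By the identities above, the substitution $\lambda\mapsto-\lambda$ leaves the integral kernel $E_+\e^{\ii\lambda(x-y)\sigma_3}E_+^{-1}$ (the transfer matrix of $\psi_x=X_+\psi$) unchanged, while $\triangle X_+$ depends on $k$ only; hence the equation changes only through its inhomogeneous term, $E_+\e^{\ii\Phi\sigma_3}\mapsto E_+\e^{\ii\Phi\sigma_3}C_+$. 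By linearity and by uniqueness of the Volterra solution, this forces $(\Psi_+)^{+}=\Psi_+ C_+$ across the cut; reading off columns yields the relation for $\Psi_+^{(1)}$ on $\gamma$ in \eqref{2.25} and the one for $\Psi_+^{(2)}$ on $\bar{\gamma}$. Running the identical argument on \eqref{2.13} gives $(\Psi_-)^{+}=\Psi_- C_-$, hence the relations for $\Psi_-^{(2)}$ on $\gamma$ and $\Psi_-^{(1)}$ on $\bar{\gamma}$. Which column relation is recorded on $\gamma$ and which on $\bar{\gamma}$ is dictated by the analyticity domains in Lemma~\ref{lem2.1}(ii)--(iii): on $\gamma$ one keeps the relation whose ``jumped'' column is analytic in $\{\mathrm{Im}\,k^2>0\}$, on $\bar{\gamma}$ the one analytic in $\{\mathrm{Im}\,k^2<0\}$.

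For the scattering data, I would start from \eqref{2.21}, $a=\frac{1}{d}\,\text{wr}(\Psi_+^{(1)},\Psi_-^{(2)})$. Passing to the other boundary value and inserting $\Psi_+^{(1)}\mapsto-\frac{\lambda+\nu}{kq_+}\Psi_+^{(2)}$ and $\Psi_-^{(2)}\mapsto-\frac{\lambda+\nu}{k\bar{q}_-}\Psi_-^{(1)}$, then using bilinearity and antisymmetry of the Wronskian together with \eqref{2.23} (so that $\text{wr}(\Psi_+^{(2)},\Psi_-^{(1)})=-d\,\bar{a}$), and the transformation $d\mapsto d(-\lambda)=\frac{2\lambda}{\lambda-\nu}=-\frac{2\lambda(\lambda+\nu)}{k^2q_0^2}$ of \eqref{2.14}, all the explicit prefactors cancel and (using $q_+\bar{q}_+=q_-\bar{q}_-=q_0^2$) one is left with $a^{+}(k)=\frac{q_0^2}{q_+\bar{q}_-}\bar{a}(k)=\frac{q_-}{q_+}\bar{a}(k)$, which is \eqref{2.27}. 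The corresponding jumps for $b$, $\bar{a}$, $\bar{b}$, if required, drop out of \eqref{2.22}--\eqref{2.24} in exactly the same manner. The one genuinely delicate point, and the step I would be most careful about, is the orientation and boundary-value bookkeeping: one has to fix an orientation of $\gamma\cup\bar{\gamma}$ and use Lemma~\ref{lem2.1}(ii)--(iii) to decide, at each point of the cut, which column of $\Psi_\pm$ is the analytic continuation from its own domain and which is the ``jumped'' object, so that the superscript ``$+$'' in \eqref{2.25} and \eqref{2.27} consistently refers to one and the same side; once this convention is fixed, the computations above are routine.
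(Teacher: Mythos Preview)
Your proof is correct. Both you and the paper rest on the same two observations --- that crossing the cut amounts to $\lambda\mapsto-\lambda$, and that this symmetry acts on the asymptotic data $E_\pm\e^{\ii\Phi\sigma_3}$ by right multiplication with the off-diagonal matrix $C_\pm$ --- but you organize the remaining step differently. The paper's argument is shorter and more elementary: since $(\Psi_+^{(1)})^+$ is still a solution of $\psi_x=X\psi$ (the differential equation itself does not see $\lambda$), it must be a linear combination $\beta_1\Psi_+^{(1)}+\beta_2\Psi_+^{(2)}$ with $x$-independent coefficients; sending $x\to+\infty$ and comparing with \eqref{2.5}, \eqref{2.8} immediately gives $\beta_1=0$, $\beta_2=-\frac{\lambda+\nu}{kq_+}$, and the remaining relations ``follow the similar arguments''. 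You instead push the $\lambda\mapsto-\lambda$ symmetry through the full Volterra equation and appeal to uniqueness of its solution. Your route has the merit of producing the matrix identity $(\Psi_\pm)^+=\Psi_\pm C_\pm$ in one stroke and of explaining structurally why the propagator $E_\pm\e^{\ii\lambda(x-y)\sigma_3}E_\pm^{-1}$ is even in $\lambda$; the paper's route avoids the integral equation entirely and needs only the ODE together with the large-$x$ behaviour. For \eqref{2.27} the two arguments essentially coincide: both insert the column jumps into \eqref{2.21} and use \eqref{2.23}.
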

\begin{proof}
It is noted that the pairs $\{\Psi_-^{(1)},\Psi_-^{(2)}\}$ and $\{\Psi_+^{(1)},\Psi_+^{(2)}\}$ are both fundamental sets of solutions of $x$-part Lax pair \eqref{2.1}. Thus, the limit $(\Psi_+^{(1)})^+(x,t;k)$ satisfies
\berr
(\Psi_+^{(1)})^+(x,t;k)=\beta_1\Psi_+^{(1)}(x,t;k)+\beta_2\Psi_+^{(2)}(x,t;k),\quad k^2\in\gamma,
\eerr
where $\beta_1,~\beta_2$ are independent of $x$. Letting $x\rightarrow+\infty$ on both side of above equation, we get from \eqref{2.5} and \eqref{2.8} that
\berr
\beta_1=0,\quad \beta_2=-\frac{\lambda+k^2+\frac{1}{2}q_0^2}{kq_+},
\eerr
which yields the first relation of \eqref{2.25}. The others follow the similar arguments. Combining \eqref{2.21} with \eqref{2.25}, \eqref{2.23}, we can easily obtain \eqref{2.27}.
\end{proof}
\subsection{The Riemann--Hilbert problem and the solution of the Cauchy problem}
The scattering relation \eqref{2.17} can be rewritten in the form of conjugation of boundary values of a piecewise analytic matrix-valued function on a contour in the complex $k$-plane. The final form is
\be\label{2.28}
M^+(x,t;k)=M^-(x,t;k)J(x,t;k),~~k^2\in\Sigma,
\ee
where $M^{\pm}(x,t;k)$ denote the boundary values of $M(x,t;k)$ according to a chosen orientation of $\Sigma$, see Fig. \ref{fig1}. Indeed, define the matrix $M(x,t;k)$ as follows:
\be\label{2.29}
M(x,t;k)=\left\{
\begin{aligned}
&\bigg(\frac{\mu_+^{(1)}}{ad},\mu_-^{(2)}\bigg)
=\bigg(\frac{\Psi_+^{(1)}}{ad},\Psi_-^{(2)}\bigg)\e^{-\ii\Phi\sigma_3},\quad \text{Im}k^2>0\setminus\gamma,\\
&\bigg(\mu_-^{(1)}, \frac{\mu_+^{(2)}}{\bar{a}d}\bigg)=\bigg(\Psi_-^{(1)}, \frac{\Psi_+^{(2)}}{\bar{a}d}\bigg)\e^{-\ii\Phi\sigma_3},~\quad \text{Im}k^2<0\setminus\bar{\gamma},\\
\end{aligned}
\right.
\ee
where the dependence on the variables $x$, $t$, $k$ on the right-hand side has been suppressed for brevity. Then the boundary values $M^{\pm}(x,t;k)$ relative to $\Sigma$ are related by \eqref{2.28} with the jump matrix $J$ is given by
\bea\label{2.30}
J(x,t;k)=\left\{
\begin{aligned}
&\begin{pmatrix}
\frac{1}{d(k)}[1-r(k)\bar{r}(k)] ~& -\bar{r}(k)\e^{2\ii\Phi(x,t;k)}\\[4pt]
r(k)\e^{-2\ii\Phi(x,t;k)}~ & d(k)
\end{pmatrix},\qquad\qquad\qquad\quad~~k^2\in\bfR,\\
&\begin{pmatrix}
-\frac{\lambda-(k^2+\frac{1}{2}q_0^2)}{kq_-}\bar{r}(k)\e^{2\ii\Phi(x,t;k)}~& -\frac{2\lambda}{k\bar{q}_-}\\[4pt]
\frac{k\bar{q}_-}{2\lambda}[1-r(k)\bar{r}(k)] ~& \frac{\lambda+k^2+\frac{1}{2}q_0^2}{k\bar{q}_-}r(k)\e^{-2\ii\Phi(x,t;k)}
\end{pmatrix},\quad k^2\in\gamma,\\
&\begin{pmatrix}
\frac{\lambda+k^2+\frac{1}{2}q_0^2}{kq_-}\bar{r}(k)\e^{2\ii\Phi(x,t;k)} ~& \frac{kq_-}{2\lambda}[1-r(k)\bar{r}(k)]\\[4pt]
-\frac{2\lambda}{kq_-} ~& -\frac{\lambda-(k^2+\frac{1}{2}q_0^2)}{k\bar{q}_-}r(k)\e^{-2\ii\Phi(x,t;k)}
\end{pmatrix},\quad k^2\in\bar{\gamma},
\end{aligned}
\right.
\eea
where \be
r(k)=-\frac{\bar{b}(k)}{a(k)}.
\ee
We note that the jump of $M$ across $\gamma\cup\bar{\gamma}$ is obtained according to the jump conditions in Lemma \ref{lem2.2}. Finally, it follows from Lemma \ref{lem2.1} and the relationship \eqref{2.29} between $M$ and $\mu_\pm$, it can be shown that $M$ admits the following large $k$ asymptotic expansion
\be
M(x,t;k)=I+\frac{M_1(x,t)}{k}+O(k^{-2}),\quad k\rightarrow\infty.
\ee
Thus, the solution $q(x,t)$ of the GI-type derivative NLS equation \eqref{1.2} can be expressed in terms of the solution of the basic RH problem as follows:
\be\label{2.33}
q(x,t)=-2(M_1(x,t))_{12}=-2\lim_{k\rightarrow\infty}(kM(x,t;k))_{12},
\ee
where $M$ is the solution of the following RH problem:\\
Suppose that $a(k)\neq0$ for $\{\text{Im}k^2>0\}\cup\Sigma$. Determine a $2\times2$ matrix-valued function $M(x,t;k)$ such that

$\bullet$ $M(x,t;k)$ is a sectionally meromorphic function in $\bfC\setminus\{k^2\in\Sigma\}$;

$\bullet$ $M(x,t;k)$ satisfies the jump condition in \eqref{2.28} with the jump matrix given by \eqref{2.30};

$\bullet$  $M(x,t;k)$ has the following asymptotics:
\be
M(x,t;k)=I+O(k^{-1}),~k\rightarrow\infty.
\ee

Although it is possible to perform this analysis directly in the complex $k$-plane, the symmetry of the spectral functions $a(k),~b(k)$ in \eqref{2.20} suggest that it is convenient to introduce a new spectral variable $z$ by $z=k^2$. This change of spectral parameter appeared already in \cite{KN} and was further employed in \cite{JL2,JX2}. One advantage of working with $z$ is that we can more easily analyze the signature table of the real part for the new phase function in order to find the long-time asymptotics.

The symmetry relation \eqref{2.16} implies that the solution $M(x,t;k)$ obeys the symmetry
\be\label{2.35}
M(x,t;k)=\sigma_3 M(x,t;-k)\sigma_3,\quad k\in\bfC\setminus\Sigma_0.
\ee
Hence, letting $z=k^2$, we can define $m(x,t;z)$ by the equation
\be\label{2.36}
m(x,t;z)=\begin{pmatrix}
1\quad & 0\\[4pt]
\frac{1}{2}\bar{q} \quad& 1\\
\end{pmatrix}k^{-\frac{\hat{\sigma}_3}{2}}M(x,t;k),\quad z\in\bfC\setminus\Sigma.
\ee
The factor $k^{-\frac{\hat{\sigma}_3}{2}}$ is included in \eqref{2.36} in order to make the right-hand side an even function of $k$ and $k^{-\frac{\hat{\sigma}_3}{2}}$ acts on a $2\times2$ matrix $A$ by $k^{-\frac{\sigma_3}{2}}Ak^{\frac{\sigma_3}{2}}$; the matrix involving $\bar{q}$ is included to ensure that $m\rightarrow I$ as $z\rightarrow\infty$. We define the new spectral function $\rho(z)$ by
\be\label{2.37}
\rho(z)=\frac{r(k)}{k},\quad z\in\bfR.
\ee
Then the Riemann--Hilbert problem for $M(x,t;k)$ can be rewritten in terms of $m(x,t;z)$ as
\be\label{2.38}
\begin{aligned}
&m^+(x,t;z)=m^-(x,t;z)J^{(0)}_1(x,t;z),\quad z\in\bfR,\\
&m^+(x,t;z)=m^-(x,t;z)J^{(0)}_2(x,t;z),\quad z\in\gamma,\\
&m^+(x,t;z)=m^-(x,t;z)J^{(0)}_3(x,t;z),\quad z\in\bar{\gamma},\\
\end{aligned}
\ee
with the normalization condition
\be\label{2.39}
m(x,t;z)=I+O(z^{-1}),\quad z\rightarrow\infty,
\ee
where
\bea
\begin{aligned}
J^{(0)}_1(x,t;z)&=\begin{pmatrix}
\frac{1}{d(z)}[1-z\rho(z)\bar{\rho}(z)] ~ & -\bar{\rho}(z)\e^{2\ii t\theta(\xi;z)}\\[4pt]
z\rho(z)\e^{-2\ii t\theta(\xi;z)} ~& d(z)\\
\end{pmatrix},\\
J^{(0)}_2(x,t;z)&=\begin{pmatrix}
-\frac{\lambda-(z+\frac{1}{2}q_0^2)}{q_-}\bar{\rho}(z)\e^{2\ii t\theta(\xi;z)}~& -\frac{2\lambda}{z\bar{q}_-}\\[4pt]
\frac{z\bar{q}_-}{2\lambda}[1-z\rho(z)\bar{\rho}(z)] ~& \frac{\lambda+z+\frac{1}{2}q_0^2}{\bar{q}_-}\rho(z)\e^{-2\ii t\theta(\xi;z)}
\end{pmatrix},\\
J^{(0)}_3(x,t;z)&=\begin{pmatrix}
\frac{\lambda+z+\frac{1}{2}q_0^2}{q_-}\bar{\rho}(z)\e^{2\ii t\theta(\xi;z)} ~& \frac{q_-}{2\lambda}[1-z\rho(z)\bar{\rho}(z)]\\[4pt]
-\frac{2\lambda}{q_-} ~& -\frac{\lambda-(z+\frac{1}{2}q_0^2)}{\bar{q}_-}\rho(z)\e^{-2\ii t\theta(\xi;z)}
\end{pmatrix},
\end{aligned}
\eea
and
\bea
\lambda(z)&=&\bigg(z^2+\frac{1}{4}q_0^4\bigg)^{\frac{1}{2}}, \quad d(z)=\frac{2\lambda}{\lambda+z+\frac{1}{2}q_0^2},\\
\theta(\xi;z)&=&\frac{\Phi(x,t;k)}{t}=\lambda(\xi-2z),\quad\xi=\frac{x}{t}.
\eea
In terms of $m(x,t;z)$, \eqref{2.33} can be expressed as
\bea\label{2.43}
q(x,t)=-2\lim_{z\rightarrow\infty}(z m(x,t;z))_{12}.
\eea

\section{The long-time asymptotics}
\setcounter{equation}{0}
\setcounter{lemma}{0}
\setcounter{theorem}{0}
\setcounter{remark}{0}
In this section, we compute the long-time asymptotic behavior of the solution $q(x,t)$ of the GI-type derivative NLS equation \eqref{1.2}, as obtained by equation \eqref{2.43} by using the Deift-Zhou nonlinear steepest descent method to perform the asymptotic analysis of oscillating RH problems \cite{GB1,PD,AB,AB3,RB}. The key point of this method is the choice of contour deformations, which depends crucially on the sign structure of the quantity Re$(\ii\theta)$. We note that
\be
\theta(\xi;z)=\sqrt{z^2+\frac{1}{4}q_0^4}(\xi-2z).
\ee
Thus, we get
\be
\frac{\dd\theta(\xi;z)}{\dd z}=\frac{-4z^2+\xi z-\frac{1}{2}q_0^4}{\lambda},
\ee
which implies that $\theta$ has two stationary points
\be\label{3.3}
z_{\pm}=\frac{1}{8}(\xi\pm\sqrt{\xi^2-8q_0^2}).
\ee
Letting $z=z_1+\ii z_2$, we have
\berr
\theta^2(\xi;z)=(z_1^2-z_2^2+\frac{1}{4}q_0^4+2\ii z_1z_2)(4z_1^2-4z_2^2+\xi^2-4\xi z_1+\ii(8z_1z_2-4\xi z_2)).
\eerr
Hence, according to
\berr
\{\text{Im}\theta(\xi;z)=0\}=\{\text{Im}\theta^2(\xi;z)=0\}\cap\{\text{Re}\theta^2(\xi;z)\geq0\},
\eerr
we can find the curves of the sets $\{\text{Im}\theta(\xi;z)=0\}$. In fact, the sign structure of Re(i$\theta)$ in the complex $z$-plane is shown in Fig. \ref{fig2}.
\begin{remark}
It is noted that $|\xi|>2\sqrt{2}q_0^2$, that is, $x<-2\sqrt{2}q_0^2t$ or $x>2\sqrt{2}q_0^2t$, the function $\theta$ has two real stationary points. We will show that this sectors of the $xt$-plane correspond to plane wave regions, whereas the sectors where $|\xi|<2\sqrt{2}q_0^2$ correspond to modulated elliptic wave regions.
\end{remark}
\begin{figure}[htbp]
\begin{minipage}[t]{0.3\linewidth}
\includegraphics[width=2.1in]{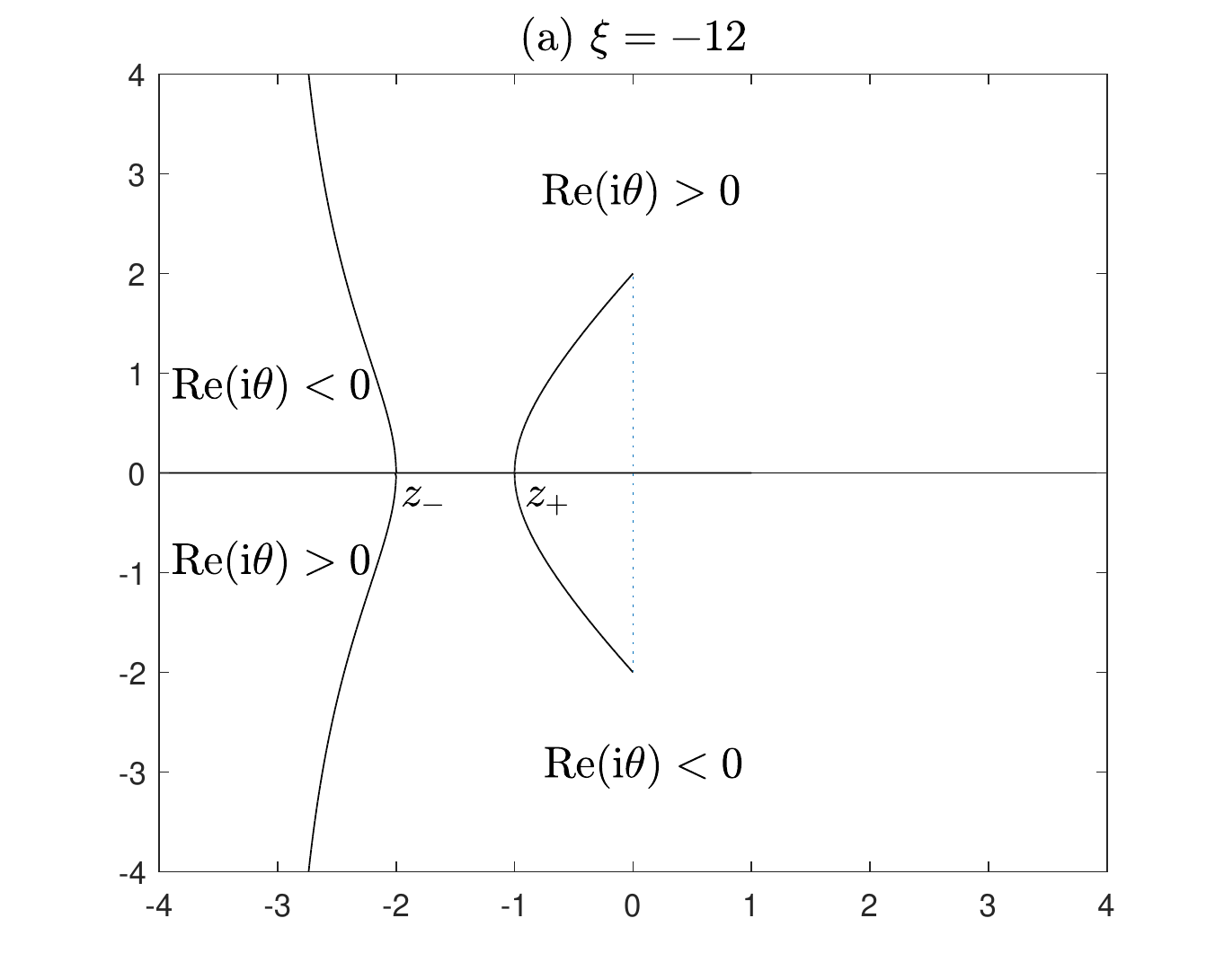}
\label{fig:side:a}
\end{minipage}%
\begin{minipage}[t]{0.3\linewidth}
\includegraphics[width=2.1in]{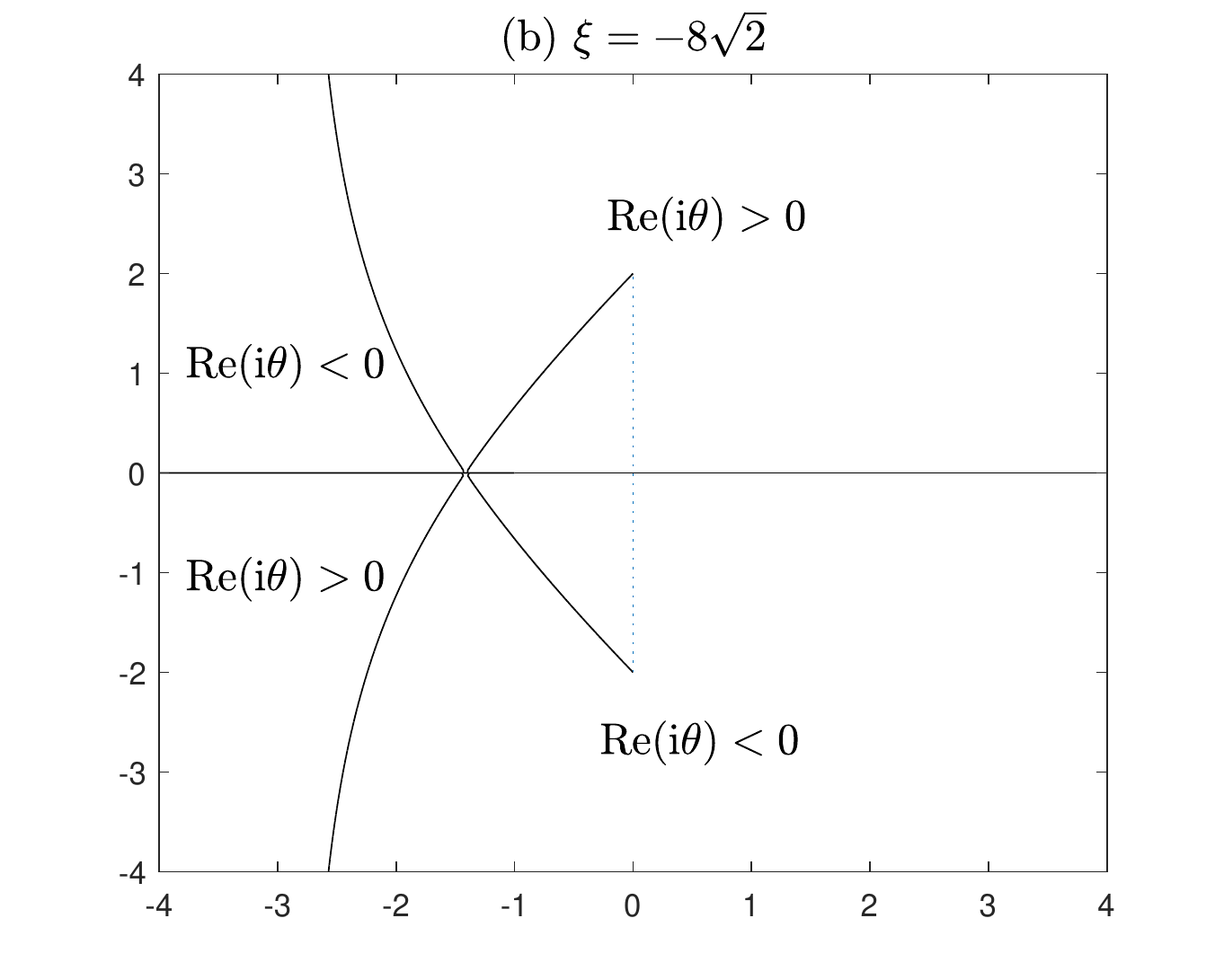}
\label{fig:side:b}
\end{minipage}
\begin{minipage}[t]{0.3\linewidth}
\includegraphics[width=2.1in]{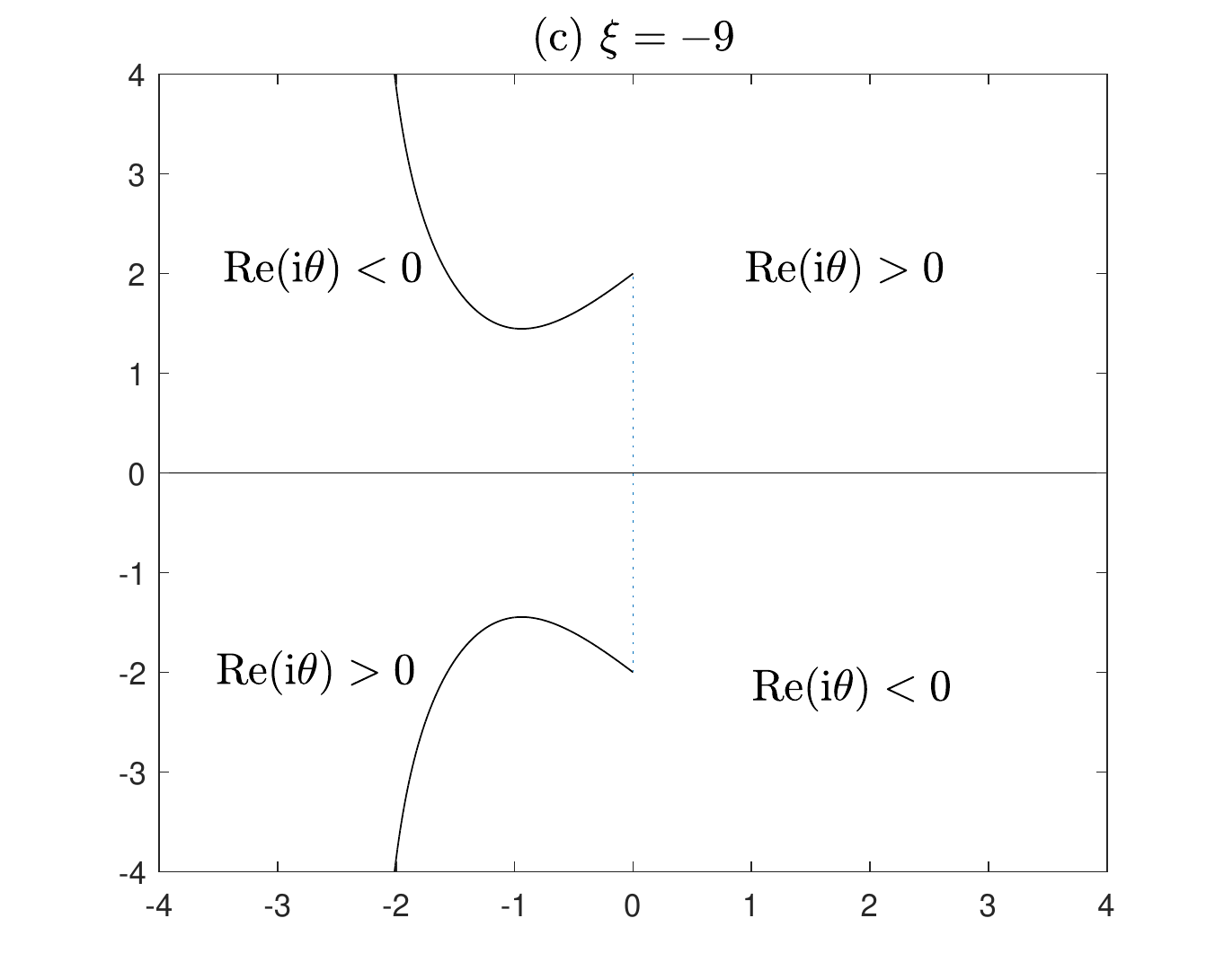}
\label{fig:side:b}
\end{minipage}\\
\begin{minipage}[t]{0.3\linewidth}
\includegraphics[width=2.1in]{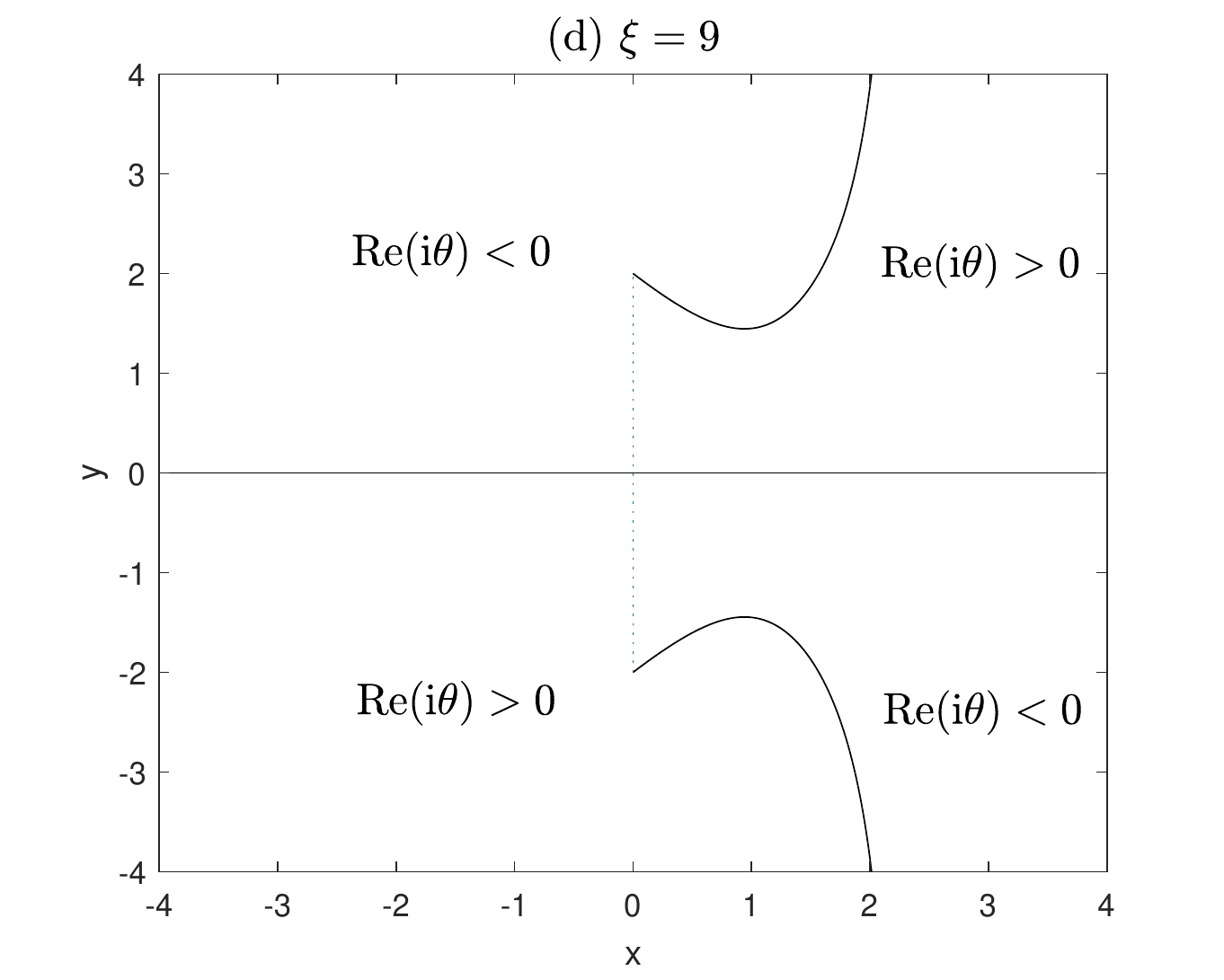}
\label{fig:side:a}
\end{minipage}%
\begin{minipage}[t]{0.3\linewidth}
\includegraphics[width=2.1in]{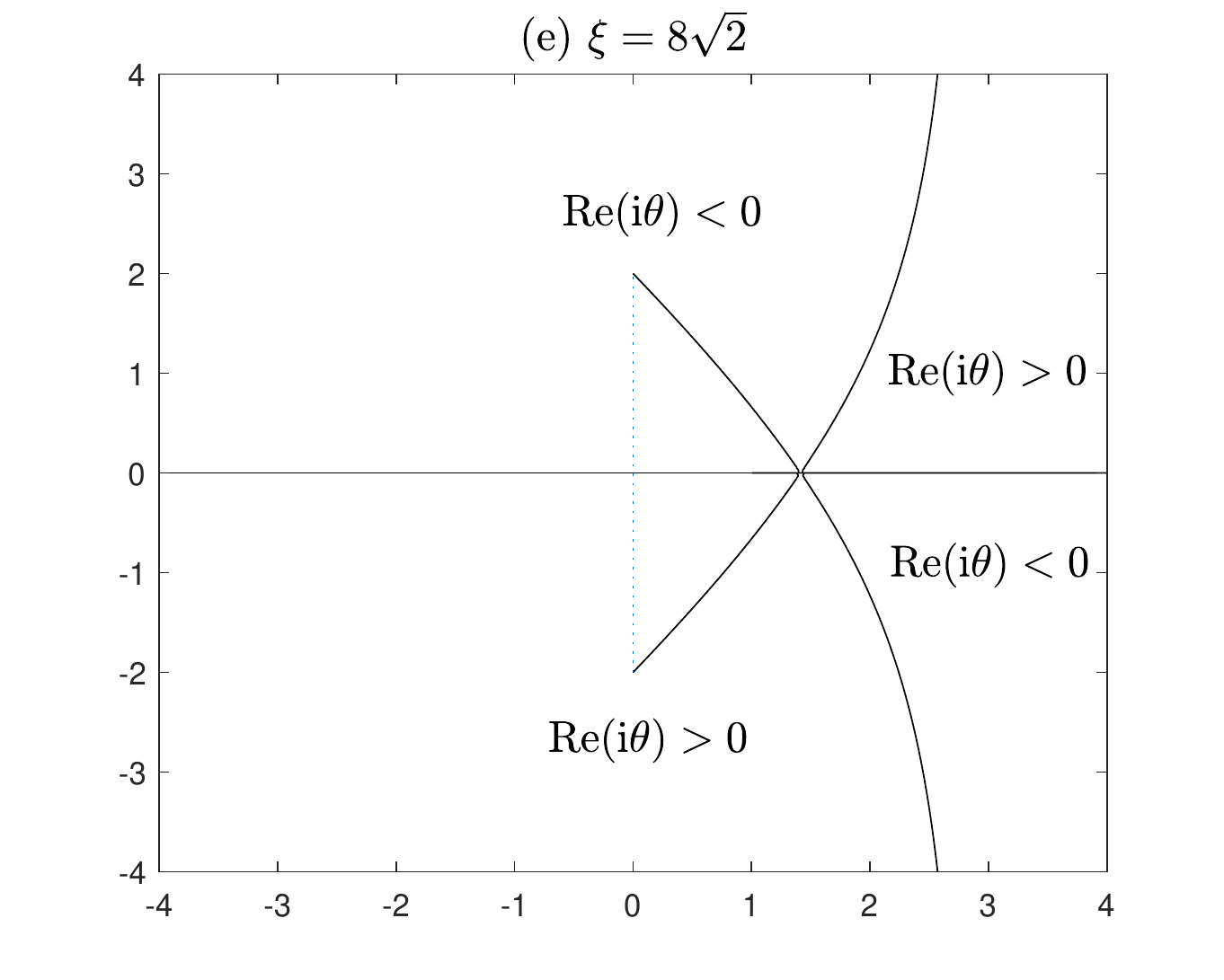}
\label{fig:side:b}
\end{minipage}
\begin{minipage}[t]{0.3\linewidth}
\includegraphics[width=2.1in]{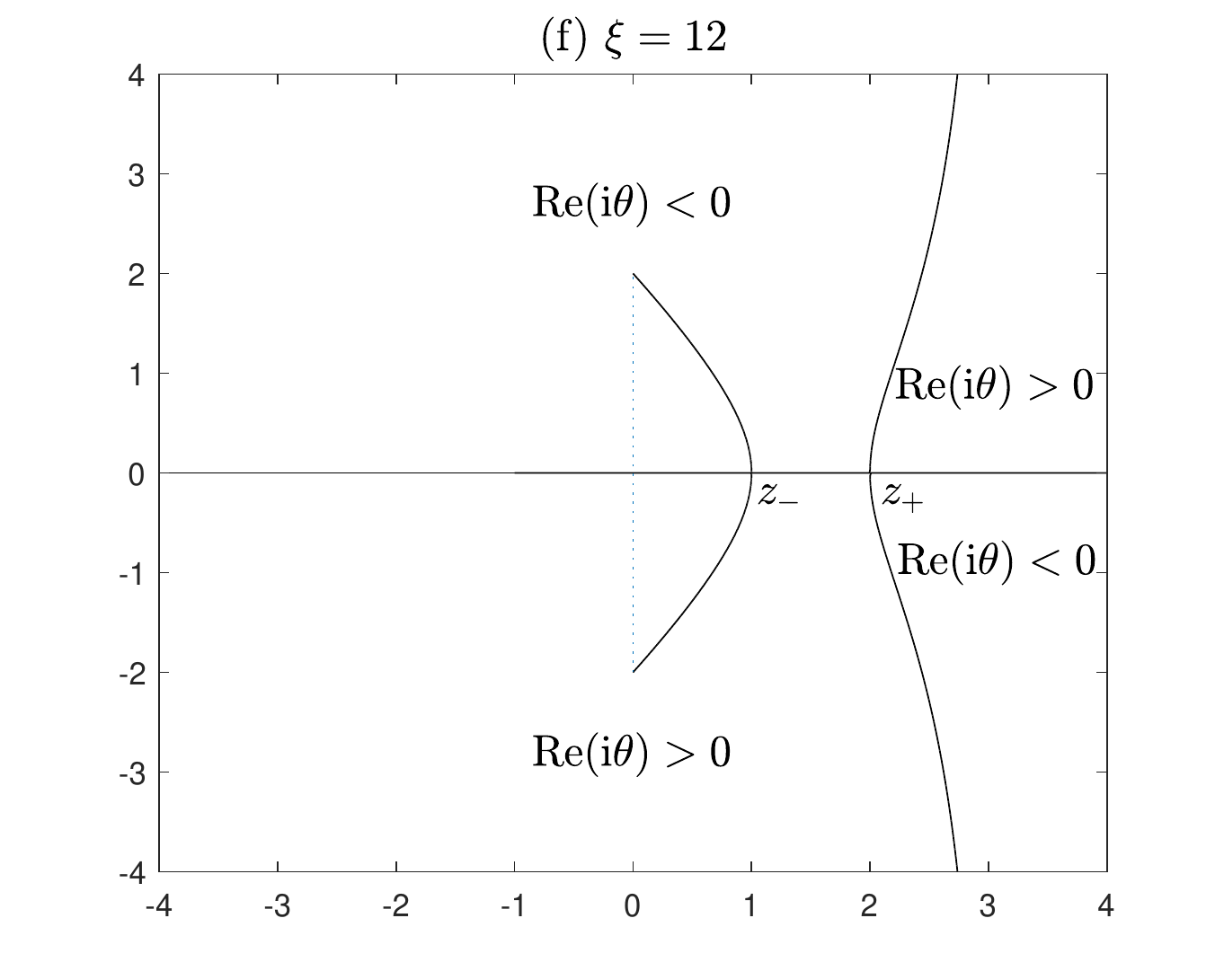}
\label{fig:side:b}
\end{minipage}
\caption{The signature table for Re(i$\theta)$ in the complex $z$-plane for various values of $\xi$ and $q_0=2$.}\label{fig2}
\end{figure}

\subsection{The plane wave region I: $x<-2\sqrt{2}q_0^2t$}\label{sec3.1}

For $x<-2\sqrt{2}q_0^2t$, that is, $\xi<-2\sqrt{2}q_0^2$, $\theta(\xi;z)$ has two real stationary points $z_\pm$ given by \eqref{3.3}. This implies that one can introduce a complex-valued function $\delta(z)$ to transform the original RH problem for $m(x,t;z)$ to the RH problem for the new function $m^{(1)}(x,t;z)$ by
\be
m^{(1)}(x,t;z)=m(x,t;z)\delta^{-\sigma_3}(z),
\ee
where
\be
\delta(z)=\exp\bigg\{\frac{1}{2\pi\ii}\int_{-\infty}^{z_-}\frac{\ln[1-s\rho(s)\bar{\rho}(s)]}{s-z}\dd s\bigg\},\quad z\in\bfC\setminus(-\infty,z_-].
\ee
\begin{lemma}
The function $\delta(z)$ has the following properties:

(i) $\delta(z)$ satisfies the following jump condition across the real axis oriented in Fig. \ref{fig1}:
\berr
\delta^+(z)=
&\delta^-(z)[1-z\rho(z)\bar{\rho}(z)],~~z\in(-\infty,z_-).
\eerr

(ii) As $z\rightarrow\infty$, $\delta(z)$ satisfies the asymptotic formula
\be\label{3.7}
\delta(z)=1+O(z^{-1}),\quad z\rightarrow\infty.
\ee

(iii) $\delta(z)$ and $\delta^{-1}(z)$ are bounded and analytic functions of $z\in\bfC\setminus(-\infty,z_-]$ with continuous boundary values on $(-\infty,z_-)$.

(iv) $\delta(z)$ obeys the symmetry $$\delta(z)=\overline{\delta(\bar{z})}^{-1},\quad z\in\bfC\setminus(-\infty,z_-].$$
\end{lemma}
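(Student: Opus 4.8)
The plan is to write $\delta(z)=\e^{\chi(z)}$ with
\berr
\chi(z)=\frac{1}{2\pi\ii}\int_{-\infty}^{z_-}\frac{\ln[1-s\rho(s)\bar{\rho}(s)]}{s-z}\,\dd s,\qquad z\in\bfC\setminus(-\infty,z_-],
\eerr
and to extract all four statements from the elementary theory of scalar Cauchy integrals. The one nonroutine preliminary is to understand the density. Using $\rho(z)=r(k)/k$ with $z=k^2$, $r(k)=-\bar b(k)/a(k)$, the normalization $a\bar a-b\bar b=1$ from \eqref{2.19}, and the symmetries \eqref{2.20} — treating separately $z>0$ (so $k\in\bfR$) and $z<0$ (so $k\in\ii\bfR$) — one checks the uniform identity $1-z\rho(z)\bar{\rho}(z)=|a(k)|^{-2}$ on the real $z$-axis. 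Since $\xi<-2\sqrt{2}q_0^2$ forces $z_-<0$, the whole contour $(-\infty,z_-]$ sits on the negative real axis, where $|a(k)|\leq1$; hence $g(s):=\ln[1-s\rho(s)\bar{\rho}(s)]=-2\ln|a(k)|$ is real and nonnegative there, bounded (as $a$ is continuous, nonvanishing on $\Sigma$, and tends to $1$ at infinity), and integrable with respect to $\dd s$ on $(-\infty,z_-]$ (decay at $-\infty$ coming from the decay of $r$ guaranteed by the $L^{1,1}$ hypotheses on $q-q_\pm$ and $|q|^2-q_0^2$).

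Granting this, part (iii) is standard. The integrand of $\chi$ is analytic in $z\notin(-\infty,z_-]$ with locally uniform convergence, so $\chi$, and with it $\delta=\e^{\chi}$ and $\delta^{-1}=\e^{-\chi}$, is analytic there. Boundedness follows from
\berr
\big|\ln|\delta(z)|\big|=|\,\text{Re}\,\chi(z)\,|=\frac{1}{2\pi}\bigg|\int_{-\infty}^{z_-}\frac{g(s)\,\text{Im}\,z}{|s-z|^{2}}\,\dd s\bigg|\leq\frac{1}{2}\|g\|_{L^{\infty}},
\eerr
which bounds $|\delta|$ above and below by $\e^{\pm\frac12\|g\|_{L^\infty}}$; in particular no growth occurs at the endpoint $z_-$, because $g(z_-)$ is real, so the local behavior $\delta(z)\sim(z-z_-)^{g(z_-)/2\pi\ii}$ is of bounded modulus. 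Since $g$ is H\"older continuous on $(-\infty,z_-)$ (inherited from the smoothness of the scattering data) and integrable, the Plemelj--Privalov theorem then gives continuous boundary values $\delta^{\pm}$ on $(-\infty,z_-)$.

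Part (i) is the Sokhotski--Plemelj formula: with the orientation of Fig.~\ref{fig1}, $\chi^{+}(z)-\chi^{-}(z)=g(z)=\ln[1-z\rho(z)\bar{\rho}(z)]$ on $(-\infty,z_-)$, so $\delta^{+}(z)=\delta^{-}(z)\e^{g(z)}=\delta^{-}(z)[1-z\rho(z)\bar{\rho}(z)]$. Part (ii) comes from expanding $(s-z)^{-1}=-\sum_{n\geq0}s^{n}z^{-n-1}$ for large $|z|$ along the contour, which gives $\chi(z)=-(2\pi\ii z)^{-1}\int_{-\infty}^{z_-}g(s)\,\dd s+O(z^{-2})=O(z^{-1})$, hence $\delta(z)=\e^{O(z^{-1})}=1+O(z^{-1})$. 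Part (iv) uses that $g$ is real: conjugating the integral shows $\overline{\chi(\bar z)}=-\chi(z)$, so $\overline{\delta(\bar z)}=\e^{\overline{\chi(\bar z)}}=\e^{-\chi(z)}=\delta(z)^{-1}$, i.e. $\delta(z)=\overline{\delta(\bar z)}^{-1}$.

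The only delicate point is the first one: correctly carrying the Schwartz-conjugate conventions through the substitution $z=k^2$ to obtain $1-z\rho(z)\bar{\rho}(z)=|a(k)|^{-2}$, and thereby checking that the logarithmic density is real, nonnegative, bounded, and integrable on $(-\infty,z_-]$. Once that is in place, the four properties follow by the routine analytic facts about scalar Cauchy-type integrals used above.
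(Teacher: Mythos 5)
Your proof is correct and is precisely the standard Cauchy-integral/Plemelj argument that the paper relies on implicitly (the lemma is stated there without proof), including the key verification that on $(-\infty,z_-]\subset\bfR^-$ one has $1-s\rho(s)\bar{\rho}(s)=|a(k)|^{-2}\geq 1$, so the logarithmic density is real, nonnegative and bounded. The only cosmetic point is in (ii): the geometric-series expansion of $(s-z)^{-1}$ is not valid uniformly on the unbounded contour; replace it by the identity $\frac{1}{s-z}=-\frac{1}{z}+\frac{s}{z(s-z)}$ together with the decay of the density at $-\infty$, which yields the same $O(z^{-1})$ estimate.
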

Then $m^{(1)}(x,t;z)$ satisfies the jump condition
\bea
\begin{aligned}
&m^{(1)+}(x,t;z)=m^{(1)-}(x,t;z)J^{(1)}(x,t;z),\\
&z\in\Sigma^{(1)}=\Sigma=\bfR\cup\gamma\cup\bar{\gamma},
\end{aligned}
\eea
where the jump matrix $J^{(1)}=(\delta^-)^{\sigma_3}J^{(0)}(\delta^+)^{-\sigma_3}$ is given by
\bea
\begin{aligned}
J^{(1)}_1&=\begin{pmatrix}
d^{-\frac{1}{2}} ~ & 0\\[4pt]
\frac{z\rho\e^{-2\ii t\theta}(\delta^-)^{-2}}{1-z\rho\bar{\rho}}d^{\frac{1}{2}} ~& d^{\frac{1}{2}}\\
\end{pmatrix}\begin{pmatrix}
d^{-\frac{1}{2}} ~ & -\frac{\bar{\rho}\e^{2\ii t\theta}(\delta^+)^2}{1-z\rho\bar{\rho}}d^{\frac{1}{2}}\\[4pt]
0 ~& d^{\frac{1}{2}}\\
\end{pmatrix},\quad z\in(-\infty,z_-),\\
J^{(1)}_2&=\begin{pmatrix}
d^{-\frac{1}{2}} ~ & -\bar{\rho}\e^{2\ii t\theta}\delta^2d^{-\frac{1}{2}}\\[4pt]
0 ~& d^{\frac{1}{2}}\\
\end{pmatrix}\begin{pmatrix}
d^{-\frac{1}{2}} ~ & 0\\[4pt]
z\rho\e^{-2\ii t\theta}\delta^{-2}d^{-\frac{1}{2}} ~& d^{\frac{1}{2}}\\
\end{pmatrix},\quad z\in(z_-,\infty),\\
J^{(1)}_3&=\begin{pmatrix}
-\frac{\lambda-(z+\frac{1}{2}q_0^2)}{q_-}\bar{\rho}\e^{2\ii t\theta}~& -\frac{2\lambda\delta^2}{z\bar{q}_-}\\[4pt]
\frac{z\bar{q}_-\delta^{-2}}{2\lambda}[1-z\rho\bar{\rho}] ~& \frac{\lambda+z+\frac{1}{2}q_0^2}{\bar{q}_-}\rho\e^{-2\ii t\theta}
\end{pmatrix},\qquad~~~\qquad z\in\gamma,\\
J^{(1)}_4&=\begin{pmatrix}
\frac{\lambda+z+\frac{1}{2}q_0^2}{q_-}\bar{\rho}\e^{2\ii t\theta} ~& \frac{q_-\delta^2}{2\lambda}[1-z\rho\bar{\rho}]\\[4pt]
-\frac{2\lambda\delta^{-2}}{q_-} ~& -\frac{\lambda-(z+\frac{1}{2}q_0^2)}{\bar{q}_-}\rho\e^{-2\ii t\theta}
\end{pmatrix},\qquad \qquad~~~z\in\bar{\gamma}.
\end{aligned}
\eea
The new jump matrix $J^{(1)}(x,t;z)$ can be analytically extended from $\Sigma^{(1)}$. This leads to the next transformation:
\be
m^{(2)}(x,t;z)=m^{(1)}(x,t;z)H(z),
\ee
where $H(z)$ is defined by
\berr
H(z)=\left\{
\begin{aligned}
&\begin{pmatrix}
d^{\frac{1}{2}} ~& 0\\[4pt]
-z\rho\e^{-2\ii t\theta}\delta^{-2}d^{-\frac{1}{2}} ~& d^{-\frac{1}{2}}
\end{pmatrix},\quad z\in D_1,\\
&\begin{pmatrix}
d^{\frac{1}{2}} ~ & \frac{\bar{\rho}\e^{2\ii t\theta}\delta^2}{1-z\rho\bar{\rho}}d^{\frac{1}{2}}\\[4pt]
0~& d^{-\frac{1}{2}}
\end{pmatrix},\qquad\qquad\quad z\in D_2,\\
&\begin{pmatrix}
d^{-\frac{1}{2}} ~& 0\\[4pt]
\frac{z\rho\e^{-2\ii t\theta}\delta^{-2}}{1-z\rho\bar{\rho}}d^{\frac{1}{2}} ~& d^{\frac{1}{2}}
\end{pmatrix},\qquad\quad~~ z\in D_3,\\
&\begin{pmatrix}
d^{-\frac{1}{2}} ~ & -\bar{\rho}\e^{2\ii t\theta}\delta^2d^{-\frac{1}{2}}\\[4pt]
0 ~& d^{\frac{1}{2}}
\end{pmatrix},\qquad\quad z\in D_4,\\
& I,\qquad\qquad\qquad~\qquad\qquad\qquad z\in D_5\cup D_6.
\end{aligned}
\right.
\eerr
The domains $\{D_j\}_1^6$ are shown on Fig. \ref{fig3}. Then the new function $m^{(2)}(x,t;z)$ solves the following equivalent RH problem:
\bea
\begin{aligned}
&m^{(2)+}(x,t;z)=m^{(2)-}(x,t;z)J^{(2)}(x,t;z),\\
&z\in\Sigma^{(2)}=L_1\cup L_2\cup L_3\cup L_4\cup\gamma\cup\bar{\gamma},
\end{aligned}
\eea
where the curves $L_1,\ldots,L_4$ can be chosen freely, only respecting that they pass through $z_-$, do not cross $\gamma\cup\bar{\gamma}$, go to $\infty$, and lie entirely in domains with the appropriate sign of Re$(\ii\theta)$, and
the jump matrix $J^{(2)}=(H^-)^{-1}(z)J^{(1)}H^+(z)$ is given by
\bea\label{3.13}
\begin{aligned}
J^{(2)}_1&=\begin{pmatrix}
d^{-\frac{1}{2}} ~& 0\\[4pt]
z\rho\e^{-2\ii t\theta}\delta^{-2}d^{-\frac{1}{2}} ~& d^{\frac{1}{2}}
\end{pmatrix},~~ z\in L_1,\quad
J^{(2)}_2=\begin{pmatrix}
d^{-\frac{1}{2}} ~ & -\frac{\bar{\rho}\e^{2\ii t\theta}\delta^2}{1-z\rho\bar{\rho}}d^{\frac{1}{2}}\\[4pt]
0~& d^{\frac{1}{2}}
\end{pmatrix},~~~ z\in L_2,\\
J^{(2)}_3&=\begin{pmatrix}
d^{-\frac{1}{2}} ~& 0\\[4pt]
\frac{z\rho\e^{-2\ii t\theta}\delta^{-2}}{1-z\rho\bar{\rho}}d^{\frac{1}{2}} ~& d^{\frac{1}{2}}
\end{pmatrix},~~~~~ z\in L_3,\quad
J^{(2)}_4=\begin{pmatrix}
d^{-\frac{1}{2}} ~ & -\bar{\rho}\e^{2\ii t\theta}\delta^2d^{-\frac{1}{2}}\\[4pt]
0 ~& d^{\frac{1}{2}}
\end{pmatrix},~z\in L_4,\\
J^{(2)}_5&=\begin{pmatrix}
0 ~& \frac{q_-\delta^2}{\ii\sqrt{z}q_0}\\[4pt]
-\frac{\ii\sqrt{z}q_0}{q_-\delta^2} ~& 0
\end{pmatrix},~~~\qquad z\in \gamma\cup\bar{\gamma}.
\end{aligned}
\eea
We have used the jump condition
\be
\rho^+(z)=\frac{\bar{q}_-}{q_-}\bar{\rho}(z)
\ee
for $\rho(z)$ across $\gamma\cup\bar{\gamma}$ to derive the jump matrix $J^{(2)}_5$.
\begin{figure}[htbp]
  \centering
  \includegraphics[width=3in]{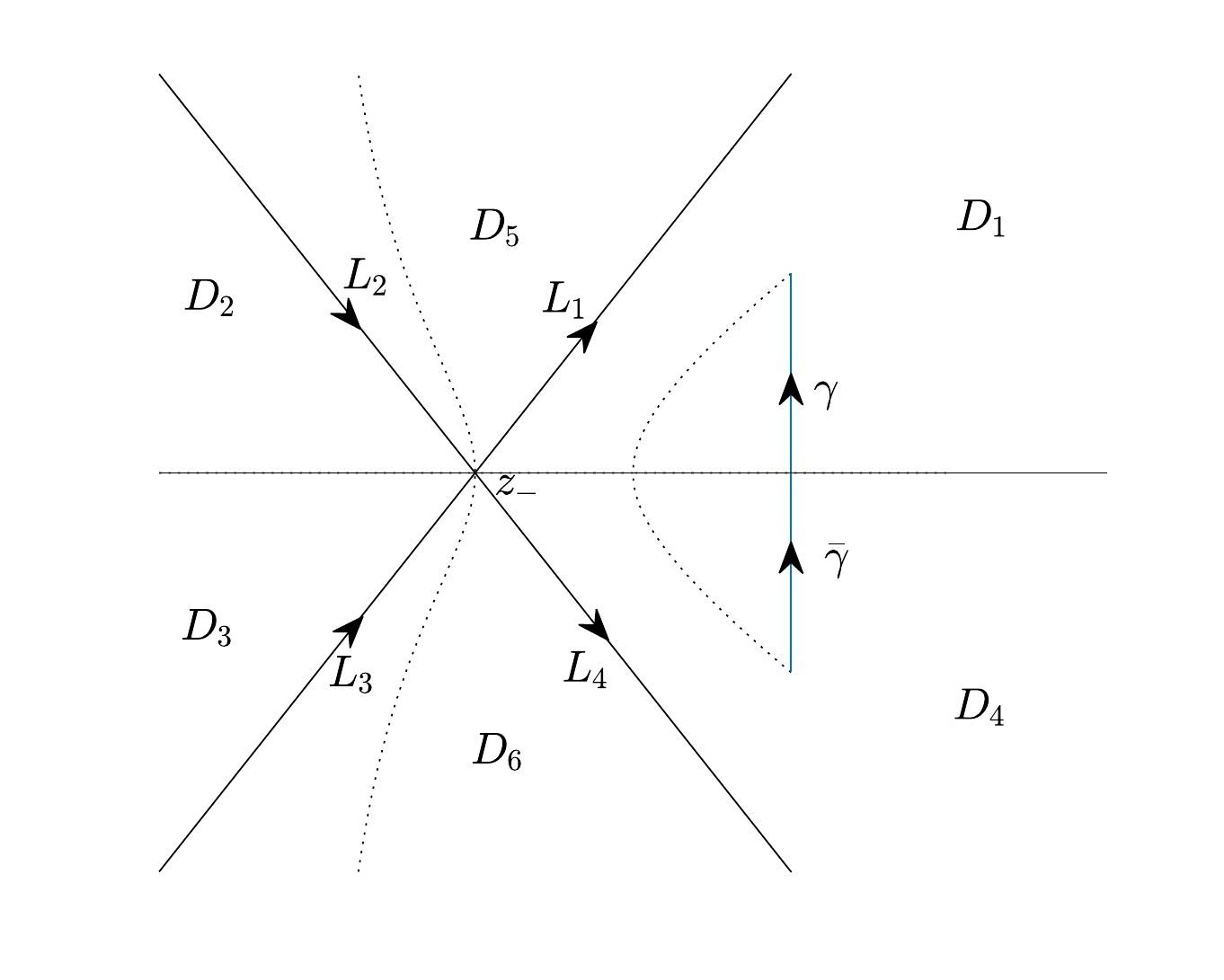}
  \caption{The oriented contour $\Sigma^{(2)}$ and the open sets $\{D_j\}_1^6$ in the complex $z$-plane.}\label{fig3}
\end{figure}

The next deformation is aim to remove the function $d$ from the jump matrices \eqref{3.13} so that the jumps along the contours $L_j$ eventually tend to the identity matrix as $t\rightarrow\infty$. This can be accomplished by letting
\be
m^{(3)}(x,t;z)=m^{(2)}(x,t;z)\hat{H}(z),
\ee
where $\hat{H}(z)$ is defined by
\berr
\hat{H}(z)=\left\{
\begin{aligned}
&I,~\qquad z\in D_j,~j=1,2,3,4,\\
&d^{\frac{\sigma_3}{2}},~\quad z\in D_5,\\
&d^{-\frac{\sigma_3}{2}},\quad z\in D_6.
\end{aligned}
\right.
\eerr
Then it follows that $m^{(3)}(x,t;z)$ satisfies the following jump conditions
\be
m^{(3)+}(x,t;z)=m^{(3)-}(x,t;z)J^{(3)}(x,t;z),\quad z\in\Sigma^{(3)}=\Sigma^{(2)},
\ee
and the jump matrix $J^{(3)}$ is given by
\bea\label{3.18}
\begin{aligned}
J^{(3)}_1&=\begin{pmatrix}
1 ~& 0\\[4pt]
z\rho\e^{-2\ii t\theta}\delta^{-2}  ~& 1
\end{pmatrix},~~ z\in L_1,\quad
J^{(3)}_2=\begin{pmatrix}
1 ~ & -\frac{\bar{\rho}\e^{2\ii t\theta}\delta^2}{1-z\rho\bar{\rho}} \\[4pt]
0~& 1
\end{pmatrix},~~~ z\in L_2,\\
J^{(3)}_3&=\begin{pmatrix}
1 ~& 0\\[4pt]
\frac{z\rho\e^{-2\ii t\theta}\delta^{-2}}{1-z\rho\bar{\rho}}  ~& 1
\end{pmatrix},~~~~~ z\in L_3,\quad
J^{(3)}_4=\begin{pmatrix}
1 ~ & -\bar{\rho}\e^{2\ii t\theta}\delta^2 \\[4pt]
0 ~& 1
\end{pmatrix},~z\in L_4,\\
J^{(3)}_5&=J^{(2)}_5=\begin{pmatrix}
0 ~& \frac{q_-\delta^2}{\ii\sqrt{z}q_0}\\[4pt]
-\frac{\ii\sqrt{z}q_0}{q_-\delta^2} ~& 0
\end{pmatrix},~~~  z\in \gamma\cup\bar{\gamma}.
\end{aligned}
\eea
Obviously, for $z\in L_j$, $j=1,\ldots,4$, the jump matrix $J^{(3)}(x,t;z)$ decays to identity matrix $I$ as $t\rightarrow\infty$ exponentially fast and uniformly outside any neighborhood of $z=z_-$. In order to arrive at a RH problem whose jump matrix does not depend on $z$, we introduce a factorization involving a scalar function $F(z)$ to be defined:
\be\label{3.19}
J^{(3)}_5(x,t;z)=\begin{pmatrix}
(F^-)^{-1}(z) ~& 0\\[4pt]
0 ~& F^-(z)
\end{pmatrix}\begin{pmatrix}
0 ~& \frac{2}{\bar{q}_-}\\[4pt]
-\frac{\bar{q}_-}{2} ~& 0
\end{pmatrix}\begin{pmatrix}
F^+(z) ~& 0\\[4pt]
0 ~& (F^+)^{-1}(z)
\end{pmatrix}
\ee
in such a way that the boundary values $F^\pm(z)$ of $F(z)$ along the two sides of $\gamma\cup\bar{\gamma}$ satisfy
\be\label{3.20}
F^+(z)F^-(z)=\frac{2\ii\sqrt{z}}{q_0}\delta^{-2}(z),\quad z\in\gamma\cup\bar{\gamma}.
\ee
Indeed, once \eqref{3.19} is satisfied, one can absorb the diagonal factors into a new piecewise analytic function whose jump across $\gamma\cup\bar{\gamma}$ is only the constant middle factor in \eqref{3.19}.

The function $F(z)$ can be determined explicitly as follows. Dividing condition \eqref{3.20} by $\lambda^+(z)$, we deduce that
\be
\bigg[\frac{\ln F(z)}{\lambda(z)}\bigg]^+-\bigg[\frac{\ln F(z)}{\lambda(z)}\bigg]^-=\frac{\ln[\frac{2\ii\sqrt{z}}{q_0}\delta^{-2}(z)]}{\lambda^+(z)},\quad z\in\gamma\cup\bar{\gamma},
\ee
and
\berr
\frac{\ln F(z)}{\lambda(z)}=O(z^{-1}),\quad z\rightarrow\infty.
\eerr
Plemelj's formula \cite{MJA} then yields $F(z)$ in the explicit form
\be
F(z)=\exp\bigg\{\frac{\lambda(z)}{2\pi\ii}\int_{\gamma\cup\bar{\gamma}}\frac{\ln[\frac{2\ii\sqrt{s}}{q_0}\delta^{-2}(s)]}{s-z}
\frac{\dd s}{\lambda^+(s)}\bigg\},\quad z\notin\gamma\cup\bar{\gamma}.
\ee
As $z\rightarrow\infty$, we find that
\be
F(\infty)=\e^{\ii\phi(\xi)},
\ee
where
\be\label{3.25}
\phi(\xi)=\frac{1}{2\pi}\int_{\gamma\cup\bar{\gamma}}\bigg[\frac{\ln[\frac{2\ii\sqrt{s}}{q_0}]}{\lambda^+(s)}+
\frac{\ii}{\pi\lambda^+(s)}\int_{-\infty}^{z_-}\frac{\ln[1-u\rho(u)\bar{\rho}(u)]}{u-s}\dd u\bigg]\dd s.
\ee
The factorization \eqref{3.19} suggests the final transformation
\be
m^{(4)}(x,t;z)=F^{\sigma_3}(\infty)m^{(3)}(x,t;z)F^{-\sigma_3}(z).
\ee
Then we have
\be
m^{(4)+}(x,t;z)=m^{(4)-}(x,t;z)J^{(4)}(x,t;z),
\ee
for $z\in\Sigma^{(4)}=\Sigma^{(3)}=L_1\cup L_2\cup L_3\cup L_4\cup\gamma\cup\bar{\gamma}$ (see Fig. \ref{fig3}). Meanwhile, the jump matrix $J^{(4)}=(F^-)^{\sigma_3}J^{(3)}(F^+)^{-\sigma_3}$ satisfies:

$\bullet$ for $z\in\gamma\cup\bar{\gamma}$, jump matrix $J^{(4)}(x,t;z)$ is a constant
\berr
J^{(4)}(x,t;z)=J^{mod}=\begin{pmatrix}
0 ~& \frac{2}{\bar{q}_-}\\[4pt]
-\frac{\bar{q}_-}{2} ~& 0
\end{pmatrix};
\eerr

$\bullet$ for $z\in L_j,~j=1,\ldots,4$, jump matrix $J^{(4)}(x,t;z)$ decays to the identity
\berr
J^{(4)}(x,t;z)=I+O(\e^{-\epsilon t})
\eerr
uniformly outside any neighborhood of $z=z_-$.

Finally, one can write $m^{(4)}$ in the form
\be
m^{(4)}(x,t;z)=m^{err}(x,t;z)m^{mod}(x,t;z),
\ee
where $m^{mod}(x,t;z)$ solves the model problem:
\be
m^{mod+}(x,t;z)=m^{mod-}(x,t;z)J^{mod},\quad z\in\gamma\cup\bar{\gamma}
\ee
with constant jump matrix
\berr
J^{mod}=\begin{pmatrix}
0 ~& \frac{2}{\bar{q}_-}\\[4pt]
-\frac{\bar{q}_-}{2} ~& 0
\end{pmatrix},
\eerr
and
\be\label{3.33}
m^{err}(x,t;z)=I+O(t^{-\frac{1}{2}}).
\ee
The last estimate \eqref{3.33} can be justified by considering the parametrix associated with the RH problem for $m^{(4)}(x,t;z)$, see \cite{RB,GB1}. The error of order $O(t^{-1/2})$ comes from the contribution of the jump near $z_-$.

Define
\be
\nu(z)=\bigg(\frac{z-\frac{\ii}{2}q_0^2}{z+\frac{\ii}{2}q_0^2}\bigg)^{\frac{1}{4}}.
\ee
Then, we have
\berr
\nu^+(z)=\ii\nu^-(z)
\eerr
on $\gamma\cup\bar{\gamma}$, and $\nu(z)$ admits the large-$z$ expansion
\berr
\nu(z)=1-\frac{\ii q_0^2}{4z}+O(z^{-2}),\quad z\rightarrow\infty.
\eerr
As for the model RH problem, its solution thus can be given explicitly in terms of $\nu(z)$:
\be
m^{mod}(x,t;z)=\frac{1}{2}\begin{pmatrix}
\nu(z)+\frac{1}{\nu(z)} ~& \frac{2}{\ii\bar{q}_-}\big(\nu(z)-\frac{1}{\nu(z)}\big)\\[4pt]
\frac{\ii\bar{q}_-}{2}\big(\nu(z)-\frac{1}{\nu(z)}\big) ~& \nu(z)+\frac{1}{\nu(z)}
\end{pmatrix}.
\ee
Then, going back to the determination of $q(x,t)$ in terms of the solution of the basic RH problem, we have
\bea
q(x,t)&=&-2\lim_{z\rightarrow\infty}(z m(x,t;z))_{12}\nn\\
&=&-2\lim_{z\rightarrow\infty}(z m^{(4)}(x,t;z))_{12}F^{-2}(\infty)+O(t^{-\frac{1}{2}})\\
&=&-2\lim_{z\rightarrow\infty}(z m^{mod}(x,t;z))_{12}F^{-2}(\infty)+O(t^{-\frac{1}{2}}).\nn
\eea
Taking into account that $-2\lim_{z\rightarrow\infty}(z m^{mod}(x,t;z))_{12}=q_-$, and $F^{-2}(\infty)=\e^{-2\ii\phi(\xi)}$, we arrive at the following theorem.
\begin{theorem}
In the region $x<-2\sqrt{2}q_0^2t$, as $t\rightarrow\infty$, the asymptotics of the solution $q(x,t)$ of the initial value problem for the GI-type derivative NLS equation \eqref{1.2} takes the form of a plane wave:
\be\label{3.39}
q(x,t)=q_-\e^{-2\ii\phi(\xi)}+O(t^{-\frac{1}{2}}),\quad t\rightarrow\infty,
\ee
where $\phi(\xi)$ is defined by \eqref{3.25}.
\end{theorem}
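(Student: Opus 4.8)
The plan is to apply the Deift--Zhou nonlinear steepest descent method to the RH problem \eqref{2.38}--\eqref{2.39} for $m(x,t;z)$. In the sector $\xi<-2\sqrt{2}\,q_0^2$ the phase $\theta(\xi;z)=\lambda(z)(\xi-2z)$ has, by \eqref{3.3}, two real simple stationary points $z_\pm$, and the signature table of $\mathrm{Re}(\ii\theta)$ in Fig.~\ref{fig2} dictates the contour deformations. First I would conjugate by a scalar function to prepare the jump $J_1^{(0)}$ on $\bfR$ for a lens opening: on $(-\infty,z_-)$ the relevant factorization of $J_1^{(0)}$ has a diagonal middle factor $\mathrm{diag}(1-z\rho\bar\rho,(1-z\rho\bar\rho)^{-1})$, so the substitution $m^{(1)}=m\,\delta^{-\sigma_3}$ with $\delta$ solving the scalar RH problem on $(-\infty,z_-]$ absorbs it. Here one uses that $a(k)\neq0$ on $\{\mathrm{Im}\,k^2>0\}\cup\Sigma$ together with \eqref{2.19} to see $1-z\rho(z)\bar\rho(z)>0$ on $(-\infty,z_-)$, which makes $\ln[1-z\rho\bar\rho]$ real and $\delta,\delta^{-1}$ bounded and analytic off the cut --- this is the content of the lemma on $\delta$ above.

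Next come three cosmetic conjugations. Since $J^{(1)}$ extends off $\bfR$, I introduce $H(z)$ on the lens domains $\{D_j\}_1^6$ of Fig.~\ref{fig3}, set $m^{(2)}=m^{(1)}H$, and thereby move the real-axis jump onto four contours $L_1,\dots,L_4$ through $z_-$ lying in regions where $\mathrm{Re}(\ii\theta)$ has the correct sign, so that $J^{(2)}=I+O(\e^{-\epsilon t})$ on the $L_j$ away from $z_-$; on $\gamma\cup\bar\gamma$ the jump becomes $J_5^{(2)}$ after using $\rho^+=(\bar q_-/q_-)\bar\rho$. A further conjugation $m^{(3)}=m^{(2)}\hat H$ with $\hat H=d^{\pm\sigma_3/2}$ on the two regions adjacent to $\gamma\cup\bar\gamma$ removes the scalar $d$ from the $L_j$-jumps, giving \eqref{3.18}. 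Finally $m^{(4)}=F^{\sigma_3}(\infty)\,m^{(3)}F^{-\sigma_3}(z)$, where $F$ is the scalar function solving the multiplicative problem \eqref{3.20} on $\gamma\cup\bar\gamma$; dividing \eqref{3.20} by $\lambda^+(z)$ converts it to an additive scalar problem with $O(z^{-1})$ decay at infinity, so Plemelj's formula gives $F$ explicitly and $F(\infty)=\e^{\ii\phi(\xi)}$ with $\phi$ as in \eqref{3.25}. After these steps $J^{(4)}$ equals the constant $J^{mod}$ on $\gamma\cup\bar\gamma$ and is $I+O(\e^{-\epsilon t})$ on the $L_j$ away from $z_-$.

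It then remains to solve the model problem and control the error. The RH problem with the constant jump $J^{mod}$ on $\gamma\cup\bar\gamma$ and normalization $I$ at infinity is solved in closed form by diagonalizing $J^{mod}$ and introducing $\nu(z)=\big((z-\tfrac{\ii}{2}q_0^2)/(z+\tfrac{\ii}{2}q_0^2)\big)^{1/4}$, which satisfies $\nu^+=\ii\nu^-$ on $\gamma\cup\bar\gamma$ and $\nu(z)=1-\ii q_0^2/(4z)+O(z^{-2})$; reading off the $(1,2)$-entry of the $1/z$-term of $m^{mod}$ then yields $-2\lim_{z\to\infty}(z\,m^{mod})_{12}=q_-$. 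Writing $m^{(4)}=m^{err}\,m^{mod}$, the function $m^{err}$ solves a small-norm RH problem: the exponentially small jumps on the $L_j$ are negligible, and the dominant contribution comes from a neighborhood of the stationary point $z_-$, where one builds a local parametrix out of parabolic cylinder functions and matches it to $m^{mod}$ on a small circle about $z_-$, giving $m^{err}=I+O(t^{-1/2})$, i.e. \eqref{3.33}. I expect this last point --- the rigorous construction of the parametrix at $z_-$ and the accompanying small-norm estimate --- to be the main technical obstacle; the rest is algebraic bookkeeping. Tracing the substitutions back through \eqref{2.43} then gives $q(x,t)=-2\lim_{z\to\infty}(z\,m^{mod})_{12}F^{-2}(\infty)+O(t^{-1/2})=q_-\e^{-2\ii\phi(\xi)}+O(t^{-1/2})$, which is \eqref{3.39}.
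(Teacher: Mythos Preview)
Your proposal is correct and follows essentially the same route as the paper: the sequence of transformations $m\to m^{(1)}\to m^{(2)}\to m^{(3)}\to m^{(4)}$ via $\delta$, $H$, $\hat H$, $F$, the explicit solution of the model problem on $\gamma\cup\bar\gamma$ via $\nu(z)$, and the small-norm estimate \eqref{3.33} from the parametrix at $z_-$ match the paper's argument step for step. Your remark that the parabolic-cylinder parametrix at $z_-$ is the main technical point is accurate; the paper itself defers this to the references \cite{RB,GB1}.
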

\begin{remark}
If we let $\xi\rightarrow-\infty$, then $z_-\rightarrow-\infty$. Thus, we get $\phi(\xi)\rightarrow\phi=\frac{1}{2\pi}\int_{\gamma\cup\bar{\gamma}}\frac{\ln[\frac{2\ii\sqrt{s}}{q_0}]}{\lambda^+(s)}
\dd s$, and then the asymptotic formulae \eqref{3.39} reduces to $q(x,t)=q_-\e^{-2\ii\phi}$. This is correspondence to our initial condition up to a phase shift as $x\rightarrow-\infty$.
\end{remark}
\subsection{The modulated elliptic wave region I: $-2\sqrt{2}q_0^2t<x<0$}\label{sec3.2}
In this subsection we compute the leading-order long-time asymptotics of the solution of the GI-type derivative NLS equation \eqref{1.2} in the region $-2\sqrt{2}q_0^2t<x<0$. Recalling the sign structure of Re$(\ii\theta)$ in this region depicted in Fig. \ref{fig2}, the main difference is the absence of real stationary points compared with the plane wave region I. This implies that it is not possible anymore to use the previous factorizations and deformations to lift the contours off the real $z$-axis in such a way that the corresponding jump matrices remain bounded as $t\rightarrow\infty$. Therefore, developing and extending the ideas used in \cite{RB,GB1}, we introduce a new $g$-function $g(z)$ appropriate for the region under consideration, which has a new real stationary point $z_0\in\bfR^-$.

Let us perform the same transformations
\berr
m(x,t;z)\rightsquigarrow m^{(1)}(x,t;z)\rightsquigarrow m^{(2)}(x,t;z)\rightsquigarrow m^{(3)}(x,t;z)
\eerr
as in Subsection \ref{sec3.1} for the plane wave region I but with $\delta(z)$ characterized by $z_0$ instead of the point $z_-$, that is,
\be\label{3.40}
\delta(z)=\exp\bigg\{\frac{1}{2\pi\ii}\int_{-\infty}^{z_0}\frac{\ln[1-s\rho(s)\bar{\rho}(s)]}{s-z}\dd s\bigg\},\quad z\in\bfC\setminus(-\infty,z_0].
\ee
In this subsection, we will use the notation $L_5$ to denote $\gamma\cup\bar{\gamma}$ for simplicity. It then follows that the function $m^{(3)}(x,t;z)$ is analytic in $\bfC\setminus(\bigcup_{j=1}^5L_j)$ and satisfies the jump conditions
\be
m^{(3)+}(x,t;z)=m^{(3)-}(x,t;z)J_j^{(3)}(x,t;z),\quad z\in L_j,~ j=1,\ldots,5,
\ee
with the normalization condition
\be
m^{(3)}(x,t;z)=I+O(z^{-1}),\quad z\rightarrow\infty.
\ee
The jump contours $L_j$ is shown in Fig. \ref{fig4}, the jump matrices $J_j^{(3)}$ are given by equations \eqref{3.18} and $\delta(z)$ defined by \eqref{3.40}.
\begin{figure}[htbp]
  \centering
  \includegraphics[width=3in]{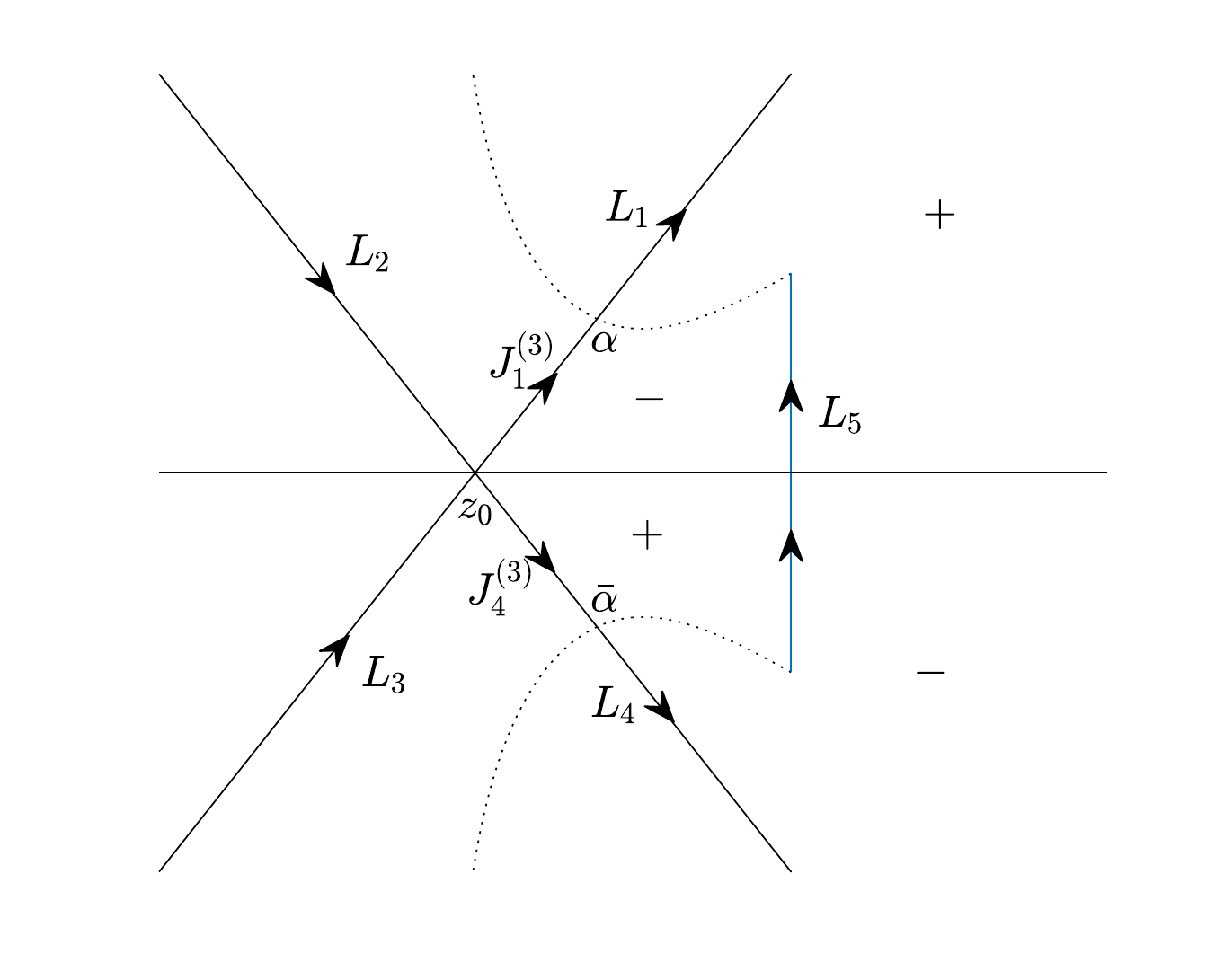}
  \caption{The oriented contours $\{L_j\}_1^5$ in the complex $z$-plane.}\label{fig4}
\end{figure}

We now encounter a new phenomenon that is not present in the plane wave region I, namely, the jumps $J^{(3)}_1$ and $J^{(3)}_4$ grow exponentially with $t$ along the segments $[z_0,\alpha]$ and $[z_0,\bar{\alpha}]$ as shown in Fig. \ref{fig4}, respectively.
In order to overcome this difficulty, we employ the new factorizations for these jumps:
\bea
J^{(3)}_1=J^{(3)}_8J^{(3)}_6J^{(3)}_8,\quad J^{(3)}_4=J^{(3)}_9J^{(3)}_7J^{(3)}_9,
\eea
where
\berr
\begin{aligned}
J^{(3)}_6&=\begin{pmatrix}
0 ~& -\frac{\delta^2}{z\rho}\e^{2\ii t\theta}\\[4pt]
\frac{z\rho}{\delta^2}\e^{-2\ii t\theta} ~& 0
\end{pmatrix},\quad J^{(3)}_7=\begin{pmatrix}
0 ~& -\bar{\rho}\delta^2\e^{2\ii t\theta}\\[4pt]
\frac{\e^{-2\ii t\theta}}{\bar{\rho}\delta^2} ~& 0
\end{pmatrix},\\
J^{(3)}_8&=\begin{pmatrix}
1 ~& \frac{\delta^2}{z\rho}\e^{2\ii t\theta}\\[4pt]
0 ~& 1
\end{pmatrix},~~\quad\quad\qquad J^{(3)}_9=\begin{pmatrix}
1 ~& 0\\[4pt]
-\frac{\e^{-2\ii t\theta}}{\bar{\rho}\delta^2} ~& 1
\end{pmatrix}.
\end{aligned}
\eerr
These factorizations allow for the deformation of the segments $[z_0,\alpha]$ and $[z_0,\bar{\alpha}]$ of Fig. \ref{fig4} into the contours shown in Fig. \ref{fig5}, where $J^{(3)}_j$ $(j=6,\ldots,9)$ denote the restriction of $J^{(3)}$ to the contours labeled by $L_j$ in Fig. \ref{fig5}. We next employ the $g$-function mechanism, more precisely, let
\be\label{3.45}
m^{(4)}(x,t;z)=\e^{\ii G(\infty)t\sigma_3}m^{(3)}(x,t;z)\e^{-\ii G(z)t\sigma_3}
\ee
for a function $G(z)$ that is required to be analytic in $\bfC\setminus(L_5\cup L_6\cup L_7)$. In fact, it is more convenient to consider the function $g(z)$ define by
\be\label{3.46}
g(z)=\theta(\xi;z)+G(z).
\ee
Then, one can infer that $g(z)$ is analytic in $\bfC\setminus(L_5\cup L_6\cup L_7)$ and has jump discontinuities across $L_5$ and $L_6\cup L_7$. Furthermore, the jump conditions of function $m^{(4)}(x,t;z)$ read
\berr
\begin{aligned}
J_1^{(4)}&=\begin{pmatrix}
1 ~& 0\\[4pt]
z\rho\delta^{-2}\e^{-2\ii gt} ~& 1
\end{pmatrix},\quad J_2^{(4)}=\begin{pmatrix}
1 ~& -\frac{\bar{\rho}\delta^2}{1-z\rho\bar{\rho}}\e^{2\ii gt}\\[4pt]
0 ~& 1
\end{pmatrix},\quad
J_3^{(4)}=\begin{pmatrix}
1 ~& 0\\[4pt]
\frac{z\rho\delta^{-2}}{1-z\rho\bar{\rho}}\e^{-2\ii gt} ~& 0
\end{pmatrix},\\
J_4^{(4)}&=\begin{pmatrix}
1 ~& -\bar{\rho}\delta^2\e^{2\ii gt}\\[4pt]
0 ~& 1
\end{pmatrix},\quad
J_5^{(4)}=\begin{pmatrix}
0 ~& \frac{q_-\delta^2}{\ii\sqrt{z}q_0}\e^{\ii(g^++g^-)t}\\[4pt]
-\frac{\ii\sqrt{z}q_0}{q_-\delta^2}\e^{-\ii(g^++g^-)t} ~& 0
\end{pmatrix},\\
J_6^{(4)}&=\begin{pmatrix}
0 ~& -\frac{\delta^2}{z\rho}\e^{\ii (g^++g^-)t}\\[4pt]
\frac{z\rho}{\delta^2}\e^{-\ii (g^++g^-)t} ~& 0
\end{pmatrix},\quad J_7^{(4)}=\begin{pmatrix}
0 ~& -\bar{\rho}\delta^2\e^{\ii (g^++g^-)t}\\[4pt]
\frac{\e^{-\ii (g^++g^-)t}}{\bar{\rho}\delta^2} ~& 0
\end{pmatrix},\\
J_8^{(4)}&=\begin{pmatrix}
1 ~& \frac{\delta^2}{z\rho}\e^{2\ii gt}\\[4pt]
0 ~& 1
\end{pmatrix},\qquad J_9^{(4)}=\begin{pmatrix}
1 ~& 0\\[4pt]
-\frac{\e^{-2\ii gt}}{\bar{\rho}\delta^2} ~& 1
\end{pmatrix}.
\end{aligned}
\eerr
\begin{figure}[htbp]
  \centering
  \includegraphics[width=3in]{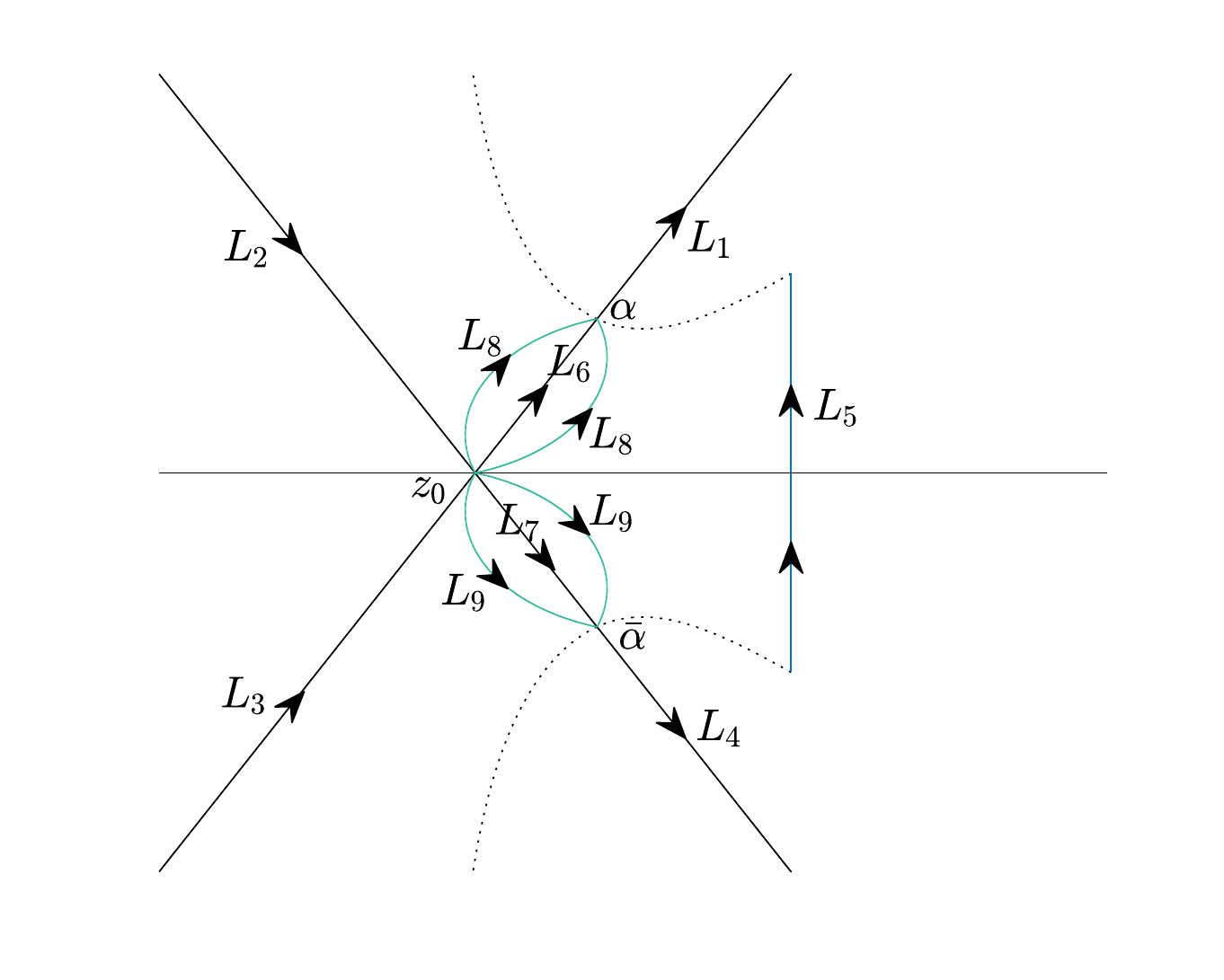}
  \caption{The new oriented contours $\{L_j\}_1^9$ in the complex $z$-plane.}\label{fig5}
\end{figure}

We next turn to determine $z_0$, $\alpha=\alpha_1+\ii\alpha_2$ and $g(z)$ such that all these jump matrices remain bounded as $t\rightarrow\infty$ by using the methods in \cite{GB1,AB3}. Let $$B=L_6\cup(-L_7).$$ Define function $w(z)$ with branch cuts $L_5$ and $B$ by
\be
w(z)=\sqrt{(z^2+\frac{1}{4}q_0^4)(z-\alpha)(z-\bar{\alpha})}.
\ee
We fix the branch cut $B$ to be oriented upwards, then $w(z)=w^-(z)=-w^+(z)$ for $z\in L_5\cup B$. The $w(z)$ gives rise to a genus-1 Riemann surface $\mathcal{X}$ with sheets $\mathcal{X}_1$, $\mathcal{X}_2$ and a basis $\{\textsl{a},\textsl{b}\}$ of cycles defined as follows: the $\textsl{b}$-cycle is a closed, anticlockwise contour around the branch cut $L_5$ that remains entirely on the first sheet $\mathcal{X}_1$ of the Riemann surface; the $\textsl{a}$-cycle consists of an anticlockwise contour that starts on the left side of $B$, then goes $B$ from the right while on the first sheet $\mathcal{X}_1$, and finally returns to the starting point via the second sheet $\mathcal{X}_2$.

Next, we let $g(z)$ be given by the sum of two Abelian integrals:
\berr
g(z)=\frac{1}{2}\bigg(\int_{-\frac{\ii}{2}q_0^2}^z+\int_{\frac{\ii}{2}q_0^2}^z\bigg)\dd g(s),
\eerr
where the Abelian differential $\dd g(z)$ is defined by
\be\label{3.50}
\dd g(z)=-4\frac{(z-z_0)(z-\alpha)(z-\bar{\alpha})}{w(z)}\dd z,
\ee
and $z_0$, $\alpha$ are to be determined. We can also write the Abelian differential $\dd g(z)$ in the following form:
\berr
\dd g(z)=-4\frac{z^3+c_2z^2+c_1z+c_0}{w(z)}\dd z,
\eerr
then, we have
\be\label{3.52}
c_2=-(2\alpha_1+z_0),~c_1=\alpha_1^2+\alpha_2^2+2\alpha_1z_0,~c_0=-z_0(\alpha_1^2+\alpha_2^2).
\ee
We then have
\be
g(z)=-2\bigg(\int_{-\frac{\ii}{2}q_0^2}^z+\int_{\frac{\ii}{2}q_0^2}^z\bigg)\frac{s^3+c_2s^2+c_1s+c_0}{w(s)}\dd s.
\ee
In order to preserve the sign structure in the transition from $\theta$ to $g$, we require that $g(z)$ has the same large-$z$ behavior as the function $\theta(\xi;z)$:
\berr
g(z)=-2z^2+\xi z+O(1), \quad z\rightarrow\infty.
\eerr
This condition implies:
\be\label{3.54}
\begin{aligned}
c_1&=\frac{1}{2}\alpha_2^2+\frac{1}{4}\xi\alpha_1+\frac{1}{8}q_0^4,\\
c_2&=-\frac{\xi}{4}-\alpha_1.
\end{aligned}
\ee
Observe that
\berr
2\bigg(\int_{-\frac{\ii}{2}q_0^2}^z+\int_{\frac{\ii}{2}q_0^2}^z\bigg)\bigg(s-\frac{\xi}{4}\bigg)=2z^2-\xi z+\frac{1}{2}q_0^4.
\eerr
Thus, the large-$z$ asymptotics of $g(z)$ can be specified as
\be\label{3.55}
g(z)=-2z^2+\xi z+g_\infty+O(z^{-1}), \quad z\rightarrow\infty,
\ee
where
\be\label{3.56}
g_\infty=-2\bigg(\int_{-\frac{\ii}{2}q_0^2}^\infty+\int_{\frac{\ii}{2}q_0^2}^\infty\bigg)
\bigg[\frac{s^3+c_2s^2+c_1s+c_0}{w(s)}-\bigg(s-\frac{\xi}{4}\bigg)\bigg]\dd s-\frac{1}{2}q_0^4.
\ee
Moreover, combining \eqref{3.52} and \eqref{3.54}, we get
\be\label{3.57}
\alpha_1=\frac{\xi}{4}-z_0,\quad \alpha_2^2=2z_0^2-\frac{1}{2}\xi z_0+\frac{1}{4}q_0^4.
\ee
It thus remains to determine $z_0$. We do so by analyzing the behavior of $g$ near $\alpha$. In a neighborhood of $\alpha$, it follows from \eqref{3.50} that
\bea
\frac{\dd g}{\dd z}=\frac{-4(\alpha-\bar{\alpha})^{\frac{1}{2}}(\alpha-z_0)}{(\alpha^2+\frac{1}{4}q_0^4)^{\frac{1}{2}}}
\bigg[(z-\alpha)^{\frac{1}{2}}&+&\bigg(\frac{1}{\alpha-z_0}+\frac{1}{2(\alpha-\bar{\alpha})}
-\frac{\alpha}{\alpha^2+\frac{1}{4}q_0^4}\bigg)(z-\alpha)^{\frac{3}{2}}\nn\\
&+&O((z-\alpha)^{\frac{5}{2}})\bigg],\quad z\rightarrow\alpha.\nn
\eea
Thus, we obtain the expansion
\bea
g(z)&=&g(\alpha)-\frac{4(\alpha-\bar{\alpha})^{\frac{1}{2}}(\alpha-z_0)}{(\alpha^2+\frac{1}{4}q_0^4)^{\frac{1}{2}}}
\bigg[\frac{2}{3}(z-\alpha)^{\frac{3}{2}}\nn\\
&&+\frac{2}{5}\bigg(\frac{1}{\alpha-z_0}+\frac{1}{2(\alpha-\bar{\alpha})}
-\frac{\alpha}{\alpha^2+\frac{1}{4}q_0^4}\bigg)(z-\alpha)^{\frac{5}{2}}
+O((z-\alpha)^{\frac{7}{2}})\bigg],\quad z\rightarrow\alpha,\nn
\eea
where
\be
g(\alpha)=-2\bigg(\int_{-\frac{\ii}{2}q_0^2}^\alpha+\int_{\frac{\ii}{2}q_0^2}^{\bar{\alpha}}\bigg)
\frac{s^3+c_2s^2+c_1s+c_0}{w(s)}\dd s-2\int_{\bar{\alpha}}^\alpha\frac{s^3+c_2s^2+c_1s+c_0}{w(s)}\dd s.
\ee
In order to obtain the desired sign structure as shown in Fig. \ref{fig6}, the leading-order term of the expansion of Re$(\ii g)$ near $\alpha$ should be of $O((z-\alpha)^{\frac{3}{2}})$. Thus, we must have Im$g(\alpha)=0$. Hence, we get
\be\label{3.59}
\int_{\bar{\alpha}}^\alpha\frac{s^3+c_2s^2+c_1s+c_0}{w(s)}\dd s=0.
\ee
\begin{figure}[htbp]
  \centering
  \includegraphics[width=3in]{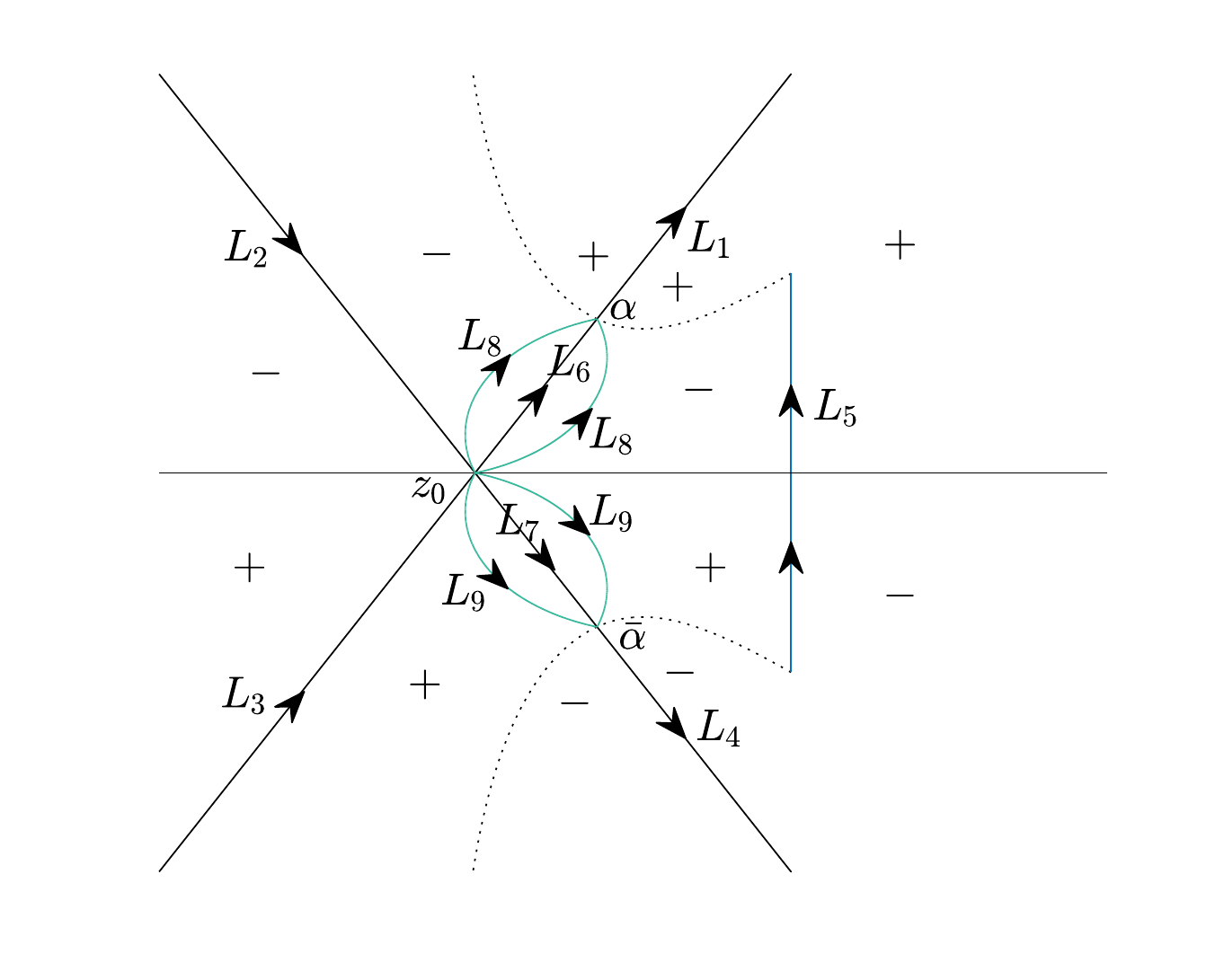}
  \caption{The sign structure of Re(i$g)$ in the complex $z$-plane.}\label{fig6}
\end{figure}
According to the discussion in \cite{GB1}, one can rewrite the condition \eqref{3.59} as the following forms
\be\label{3.60}
\int_{-\frac{\ii}{2}q_0^2}^{\frac{\ii}{2}q_0^2}\frac{s^3+c_2s^2+c_1s+c_0}{w(s)}\dd s=\int_{-\frac{\ii}{2}q_0^2}^{\frac{\ii}{2}q_0^2}\sqrt{\frac{(s-\alpha_1)^2+\alpha_2^2}{s^2+\frac{1}{4}q_0^4}}(s-z_0)\dd s=0.
\ee
Finally, substituting \eqref{3.57} into \eqref{3.60}, we find
\be\label{3.61}
\int_{-\frac{\ii}{2}q_0^2}^{\frac{\ii}{2}q_0^2}\sqrt{\frac{(s-\frac{\xi}{4}+z_0)^2+2z_0^2-\frac{1}{2}\xi z_0+\frac{1}{4}q_0^4}{s^2+\frac{1}{4}q_0^4}}(s-z_0)\dd s=0.
\ee
It is enough to determine the $z_0$ from equation \eqref{3.61}, and hence $\alpha$, function $g(z)$.
\begin{lemma}
For all $\xi\in(-2\sqrt{2}q_0^2,0)$, the integral equation \eqref{3.61} admits a unique solution $z_0=z_0(\xi)$.
\end{lemma}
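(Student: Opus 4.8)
The plan is to reduce the existence and uniqueness claim to the study of a single real-valued function of $z_0$ and then apply the intermediate value theorem together with a monotonicity argument. Fix $\xi\in(-2\sqrt{2}q_0^2,0)$ and, along the segment of integration $s\in[-\tfrac{\ii}{2}q_0^2,\tfrac{\ii}{2}q_0^2]$, parametrize $s=\ii\tau$ with $\tau\in[-\tfrac12 q_0^2,\tfrac12 q_0^2]$, so that $s^2+\tfrac14 q_0^4=\tfrac14 q_0^4-\tau^2\ge 0$ on this segment. Writing $\alpha_1=\alpha_1(z_0)=\tfrac{\xi}{4}-z_0$ and $\alpha_2^2=\alpha_2^2(z_0)=2z_0^2-\tfrac12\xi z_0+\tfrac14 q_0^4$ as in \eqref{3.57}, I would first check that $\alpha_2^2(z_0)>0$ for all real $z_0$ when $|\xi|<2\sqrt 2 q_0^2$ (its discriminant in $z_0$ is $\tfrac14\xi^2-2q_0^4<0$), so that the square root $\sqrt{(s-\alpha_1)^2+\alpha_2^2}$ appearing in \eqref{3.61} is well defined, positive, and depends smoothly on $z_0$ on the contour. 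Hence the left-hand side of \eqref{3.61} defines a real-analytic function
\[
\Theta(z_0)=\int_{-\frac12 q_0^2}^{\frac12 q_0^2}\sqrt{\frac{(\tfrac{\xi}{4}-z_0)^2+\alpha_2^2(z_0)-\tau^2-\cdots}{\tfrac14 q_0^4-\tau^2}}\,(\ii\tau-z_0)\,\ii\,\dd\tau ,
\]
which, after taking into account that the bracketed integrand is even in $\tau$ up to the factor $(\ii\tau-z_0)$, is in fact a real multiple of a genuinely real integral; the goal is to show $\Theta$ has exactly one zero in the relevant range of $z_0$.

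The key steps, in order, are: (i) show $\Theta(z_0)$ is well defined and continuous (indeed real-analytic) in $z_0$ on an interval $(z_0^{\min},z_0^{\max})$ determined by requiring $z_0\in\bfR^-$ and consistency with the stationary-point geometry $z_{\pm}$ of \eqref{3.3}; (ii) identify the sign of $\Theta$ at the two ends of this interval. At one extreme (corresponding to $\xi\to 2\sqrt2 q_0^2{}^-$ from within the region, i.e. the boundary with the plane-wave regime) the two complex branch points $\alpha,\bar\alpha$ should collide with the real stationary point, forcing $z_0\to z_-$ and $\Theta$ to change sign; at the other extreme the contribution of the factor $(s-z_0)$ dominates with a definite sign because $z_0$ is pushed far from the integration contour. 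This yields $\Theta(z_0^{\min})\,\Theta(z_0^{\max})<0$, hence at least one zero by the intermediate value theorem. (iii) For uniqueness, differentiate under the integral sign: $\partial_{z_0}$ of the integrand is a sum of the explicit term $-\ii$ (from differentiating $(\ii\tau-z_0)$) times the positive weight $\sqrt{\cdots}$, plus $(\ii\tau-z_0)$ times $\partial_{z_0}\sqrt{\cdots}$. Using $\alpha_1'(z_0)=-1$ and $(\alpha_2^2)'(z_0)=4z_0-\tfrac12\xi$, one computes $\partial_{z_0}\big[(s-\alpha_1)^2+\alpha_2^2\big]=-2(s-\alpha_1)(-1)+4z_0-\tfrac12\xi$; I expect this to combine with the explicit term so that $\Theta'(z_0)$ has a fixed sign on the whole interval, giving strict monotonicity and hence a \emph{unique} zero.

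The main obstacle I anticipate is step (iii): establishing the strict monotonicity of $\Theta$. The two pieces of $\Theta'$ have, a priori, competing signs, and controlling them requires an inequality uniform in $\tau$ over the contour and in $\xi$ over $(-2\sqrt2 q_0^2,0)$. The cleanest route is probably to rescale so that $q_0=1$ (the general case follows by the substitution $u(x,t)=q(x,t)\e^{\frac12\ii q_0^4 t}$-type scaling already used to pass from \eqref{1.1} to \eqref{1.2}, or simply by dimensional analysis of \eqref{3.61}), reducing to a one-parameter family in $\xi$, and then either (a) show that the map $z_0\mapsto\Theta(z_0)$ is convex or concave by a second differentiation, or (b) rewrite \eqref{3.60}--\eqref{3.61} as the vanishing of an Abelian integral on the Riemann surface $\mathcal X$ and invoke the standard fact (as used in \cite{GB1,AB3}) that the corresponding moment/period condition has a unique solution because the relevant differential $\dd g$ has exactly the prescribed number of zeros. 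If the elementary monotonicity estimate proves too delicate, I would fall back on this period-integral formulation, where the uniqueness is a known structural feature of the genus-one Whitham modulation ansatz rather than something to be re-proved by hand. Existence, by contrast, should be routine once the boundary signs in step (ii) are pinned down by the limiting geometry as $\xi$ approaches the endpoints of the interval $(-2\sqrt2 q_0^2,0)$.
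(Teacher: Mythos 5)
Your overall strategy (reduce \eqref{3.61} to a scalar real equation and prove existence by a sign change plus uniqueness by strict monotonicity in $z_0$) is in spirit the same as the paper's, which rescales via $x=-\xi/(4q_0^2)$, $y=(4z_0-\xi)/(2q_0^2)$, writes the condition as $\mathcal{F}(x,y)=0$ on $[0,\sqrt2/2]\times[0,\sqrt2]$, checks $\mathcal{F}(0,0)=\mathcal{F}(\sqrt2/2,\sqrt2/2)=0$ and $\mathcal{F}_y<0$ away from the corner, and concludes by the implicit function theorem. The problem is that the one step carrying all the mathematical content — the strict monotonicity, i.e.\ exactly the paper's claim $\mathcal{F}_y(x,y)<0$ — is the step you explicitly leave open (``I expect this to combine \dots''). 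Your own differentiation shows why this is not automatic: $\Theta'(z_0)$ is a sum of two terms of a priori competing signs, and no inequality uniform in $\tau$ and $\xi$ is supplied. The fallbacks you offer do not close this: convexity would require yet another unproven estimate, and ``invoking the standard fact'' about genus-one period/Whitham conditions is an appeal to the literature, not an argument; it is essentially the same citation of \cite{GB1,AB3} that the paper uses to justify its strategy, but the paper still states the concrete monotonicity inequality on which the conclusion rests, while your write-up does not.

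There are two further gaps on the existence side. First, your interval $(z_0^{\min},z_0^{\max})$ is never specified, and the endpoint signs of $\Theta$ are only described heuristically (``forcing $\Theta$ to change sign'', ``dominates with a definite sign''); by contrast, the paper anchors existence at explicitly checkable data, namely the exact corner solutions $\mathcal{F}(0,0)=0$, $\mathcal{F}(\sqrt2/2,\sqrt2/2)=0$ together with $\mathcal{F}_y(0,0)<0$, and continues the branch $y(x)$ from there. Second, a small but real slip: on the contour $s=\ii\tau$ the quantity $(s-\alpha_1)^2+\alpha_2^2=\alpha_1^2+\alpha_2^2-\tau^2-2\ii\tau\alpha_1$ is genuinely complex for $\alpha_1\neq0$, so the square root is not ``positive''; what $\alpha_2^2>0$ buys you is only that the radicand stays away from zero so a branch can be chosen analytically, and the reality of the reduced equation must then be extracted from the $\tau\to-\tau$ conjugation symmetry you mention in passing. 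As it stands, then, the proposal identifies the right reduction but does not prove either the monotonicity or the endpoint/corner information that the paper's argument (terse as it is) rests on.
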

\begin{proof}
The strategy of the proof follows from \cite{GB1,AB3}. Let
\berr
x=-\frac{\xi}{4q_0^2},\quad y=\frac{4z_0-\xi}{2q_0^2}.
\eerr
Then, equation \eqref{3.61} turns into
\be\label{3.62}
\mathcal{F}(x,y)=\int_{-1}^1\sqrt{\frac{(\ii\tau+y)^2+2y^2-4xy+1}{1-\tau^2}}(\ii\tau+2x-y)\dd\tau=0,
\ee
which is considered for $(x,y)\in[0,\sqrt{2}/2]\times[0,\sqrt{2}]$. It is easy to check that $\mathcal{F}(0,0)=\mathcal{F}(\sqrt{2}/2,\sqrt{2}/2)=0$, $\mathcal{F}_y(0,0)<0$. Moreover, $\mathcal{F}_y(x,y)<0$ for $(x,y)\neq(\sqrt{2}/2,\sqrt{2}/2)$. Therefore, by the implicit function theorem, \eqref{3.62} determines a unique function $y=y(x)$ for any $x\in[0,\sqrt{2}/2]$ such that $y(0)=0$ and $y(\sqrt{2}/2)=\sqrt{2}/2$. That is, for any $\xi\in[-2\sqrt{2}q_0^2,0]$ there exists a unique solution $z_0(\xi)$ of the integral equation \eqref{3.61} with $z_0(-2\sqrt{2}q_0^2)=-(\sqrt{2}/4)q_0^2$.
\end{proof}
We have now specified a $g$-function $g(z)$ which has appropriate signature table as in Fig. \ref{fig6}. Moreover, across the branch cuts $L_5$ and $B$, $g(z)$ satisfies the following jump conditions
\bea\label{3.63}
\begin{aligned}
&g^+(z)+g^-(z)=0,\quad z\in L_5,\\
&g^+(z)+g^-(z)=\Om,\quad z\in L_6\cup L_7,
\end{aligned}
\eea
where the real constant $\Om$ is defined by
\be\label{3.64}
\Om=-4\bigg(\int_{-\frac{\ii}{2}q_0^2}^\alpha+\int_{\frac{\ii}{2}q_0^2}^{\bar{\alpha}}\bigg)
\frac{(z-z_0)(z-\alpha)(z-\bar{\alpha})}{w(z)}\dd z.
\ee

In the following, we continue to deform the RH problem. By the jumps \eqref{3.63} of $g(z)$, we infer that the jump matrices of $m^{(4)}$ can be rewritten as
\berr
\begin{aligned}
J_1^{(4)}&=\begin{pmatrix}
1 ~& 0\\[4pt]
z\rho\delta^{-2}\e^{-2\ii gt} ~& 1
\end{pmatrix},\quad J_2^{(4)}=\begin{pmatrix}
1 ~& -\frac{\bar{\rho}\delta^2}{1-z\rho\bar{\rho}}\e^{2\ii gt}\\[4pt]
0 ~& 1
\end{pmatrix},\quad
J_3^{(4)}=\begin{pmatrix}
1 ~& 0\\[4pt]
\frac{z\rho\delta^{-2}}{1-z\rho\bar{\rho}}\e^{-2\ii gt} ~& 0
\end{pmatrix},\\
J_4^{(4)}&=\begin{pmatrix}
1 ~& -\bar{\rho}\delta^2\e^{2\ii gt}\\[4pt]
0 ~& 1
\end{pmatrix},\quad
J_5^{(4)}=\begin{pmatrix}
0 ~& \frac{q_-\delta^2}{\ii\sqrt{z}q_0}\\[4pt]
-\frac{\ii\sqrt{z}q_0}{q_-\delta^2} ~& 0
\end{pmatrix},\quad
J_6^{(4)}=\begin{pmatrix}
0 ~& -\frac{\delta^2}{z\rho}\e^{\ii\Om t}\\[4pt]
\frac{z\rho}{\delta^2}\e^{-\ii\Om t} ~& 0
\end{pmatrix},\\
 J_7^{(4)}&=\begin{pmatrix}
0 ~& -\bar{\rho}\delta^2\e^{\ii\Om t}\\[4pt]
\frac{\e^{-\ii\Om t}}{\bar{\rho}\delta^2} ~& 0
\end{pmatrix},\quad
J_8^{(4)}=\begin{pmatrix}
1 ~& \frac{\delta^2}{z\rho}\e^{2\ii gt}\\[4pt]
0 ~& 1
\end{pmatrix},\qquad J_9^{(4)}=\begin{pmatrix}
1 ~& 0\\[4pt]
-\frac{\e^{-2\ii gt}}{\bar{\rho}\delta^2} ~& 1
\end{pmatrix},
\end{aligned}
\eerr
where for $j=1,\ldots,9$ we denote by $J^{(4)}_j$  the jump associated with the contour $L_j$. Furthermore, the normalization condition of $m^{(4)}$ is
\be
m^{(4)}(x,t;z)=I+O(z^{-1}),\quad z\rightarrow\infty.
\ee
Recalling \eqref{3.46}, \eqref{3.55}, \eqref{3.56} and
\berr
\theta(\xi;z)=-2z^2+\xi z-\frac{1}{4}q_0^4+O(z^{-1}),\quad z\rightarrow\infty,
\eerr
we find that the real constant $G(\infty)$ involved in the \eqref{3.45} is equal to
\be\label{3.67}
G(\infty)=-2\bigg(\int_{-\frac{\ii}{2}q_0^2}^\infty+\int_{\frac{\ii}{2}q_0^2}^\infty\bigg)
\bigg[\frac{s^3+c_2s^2+c_1s+c_0}{w(s)}-\bigg(s-\frac{\xi}{4}\bigg)\bigg]\dd s-\frac{1}{4}q_0^4.
\ee
Our final task is to eliminate the dependence on $z$ from the jump matrices $J^{(4)}_j$ across the branch cuts $L_j$ for $j=5,6,7$. To achieve this goal, we let
\be\label{3.68}
m^{(5)}(x,t;z)=\e^{-\ii\hat{g}(\infty)\sigma_3}m^{(4)}(x,t;z)\e^{\ii\hat{g}(z)\sigma_3}.
\ee
The function $\hat{g}(z)$ is analytic in $\bfC\setminus\bigcup_{j=5}^7L_j$ with jumps
\be
\begin{aligned}
&\hat{g}^+(z)+\hat{g}^-(z)=\ii\ln\bigg(\frac{2\ii\sqrt{z}}{q_0\delta^2}\bigg),~\quad\qquad z\in L_5,\\
&\hat{g}^+(z)+\hat{g}^-(z)=\ii\ln\bigg(\frac{z\rho}{\delta^2}\bigg)+\omega+\pi,~~ z\in L_6,\\
&\hat{g}^+(z)+\hat{g}^-(z)=-\ii\ln(\bar{\rho}\delta^2)+\omega,~\quad\quad z\in L_7,
\end{aligned}
\ee
with the function $\delta(z)$ defined by \eqref{3.40} and the real constant $\omega$ determined by
\be\label{3.70}
\omega=\ii\frac{\int_{L_5}\frac{\ln\big[\frac{q_0\delta^2(s)}{2\ii\sqrt{s}}\big]}{w(s)}\dd s+\int_{L_6}\frac{\ln\big[\frac{\delta^2(s)}{s\rho(s)}\big]+\ii\pi}{w(s)}\dd s-\int_{L_7}\frac{\ln[\bar{\rho}(s)\delta^2(s)]}{w(s)}\dd s}{(\int_{L_6}-\int_{L_7})\frac{\dd s}{w(s)}}.
\ee
It follows from the Plemelj's formulae that
\be
\hat{g}(z)=\frac{w(z)}{2\pi}\bigg\{\int_{L_5}\frac{\ln\big[\frac{q_0\delta^2(s)}{2\ii\sqrt{s}}\big]}{w(s)(s-z)}\dd s+\int_{L_6}\frac{\ln\big[\frac{\delta^2(s)}{s\rho(s)}\big]+\ii(\omega+\pi)}{w(s)(s-z)}\dd s-\int_{L_7}\frac{\ln[\bar{\rho}(s)\delta^2(s)]+\ii\omega}{w(s)(s-z)}\dd s\bigg\}.
\ee
The definition \eqref{3.70} of $\omega$ ensures that
\berr
\hat{g}(z)=\hat{g}(\infty)+O(z^{-1}),\quad z\rightarrow\infty,
\eerr
where the real constant $\hat{g}(\infty)$ is given by
\be\label{3.72}
\hat{g}(\infty)=-\frac{1}{2\pi}\bigg\{\int_{L_5}\frac{\ln\big[\frac{q_0\delta^2(s)}{2\ii\sqrt{s}}\big]}{w(s)}s\dd s+\int_{L_6}\frac{\ln\big[\frac{\delta^2(s)}{s\rho(s)}\big]+\ii(\omega+\pi)}{w(s)}s\dd s-\int_{L_7}\frac{\ln[\bar{\rho}(s)\delta^2(s)]+\ii\omega}{w(s)}s\dd s\bigg\}.
\ee

Finally, we can obtain the following RH problem for $m^{(5)}(x,t;z)$:
\be
m^{(5)+}(x,t;z)=m^{(5)-}(x,t;z)J^{(5)}(x,t;z),\quad z\in L_j,
\ee
with the jump matrices given by
\berr
\begin{aligned}
J_1^{(5)}&=\begin{pmatrix}
1 ~& 0\\[4pt]
z\rho\delta^{-2}\e^{-2\ii (gt-\hat{g})} ~& 1
\end{pmatrix},\quad J_2^{(5)}=\begin{pmatrix}
1 ~& -\frac{\bar{\rho}\delta^2}{1-z\rho\bar{\rho}}\e^{2\ii(gt-\hat{g})}\\[4pt]
0 ~& 1
\end{pmatrix},\quad
J_3^{(5)}=\begin{pmatrix}
1 ~& 0\\[4pt]
\frac{z\rho\delta^{-2}}{1-z\rho\bar{\rho}}\e^{-2\ii(gt-\hat{g})} ~& 0
\end{pmatrix},\\
J_4^{(5)}&=\begin{pmatrix}
1 ~& -\bar{\rho}\delta^2\e^{2\ii(gt-\hat{g})}\\[4pt]
0 ~& 1
\end{pmatrix},\quad
J_5^{(5)}=\begin{pmatrix}
0 ~& \frac{2}{\bar{q}_-}\\[4pt]
-\frac{\bar{q}_-}{2} ~& 0
\end{pmatrix},\quad
J_6^{(5)}=\begin{pmatrix}
0 ~& \e^{\ii(\Om t-\omega)}\\[4pt]
-\e^{-\ii(\Om t-\omega)} ~& 0
\end{pmatrix},\\
 J_7^{(5)}&=\begin{pmatrix}
0 ~& -\e^{\ii(\Om t-\omega)}\\[4pt]
\e^{-\ii(\Om t-\omega)} ~& 0
\end{pmatrix},\quad
J_8^{(5)}=\begin{pmatrix}
1 ~& \frac{\delta^2}{z\rho}\e^{2\ii(gt-\hat{g})}\\[4pt]
0 ~& 1
\end{pmatrix},\qquad J_9^{(5)}=\begin{pmatrix}
1 ~& 0\\[4pt]
-\frac{\e^{-2\ii(gt-\hat{g})}}{\bar{\rho}\delta^2} ~& 1
\end{pmatrix},
\end{aligned}
\eerr
and the normalization condition
\be
m^{(5)}(x,t;z)=I+O(z^{-1}),\quad z\rightarrow\infty.
\ee
In other words, for the jump matrix $J^{(5)}(x,t;z)$, we have
\berr
J^{(5)}(x,t;z)=\left\{\begin{aligned}
&J^{mod},\qquad~~~~ z\in L_5\cup B,\\
&I+O(\e^{-\epsilon t}),~z\in L_j,~j=1,\ldots,4,8,9,
\end{aligned}
\right.
\eerr
where $B=L_6\cup(-L_7)$ and
\be
J^{mod}=\left\{
\begin{aligned}
&\begin{pmatrix}
0 ~& \frac{2}{\bar{q}_-}\\[4pt]
-\frac{\bar{q}_-}{2} ~& 0
\end{pmatrix},\qquad\quad~\qquad z\in L_5,\\
&\begin{pmatrix}
0 ~& \e^{\ii(\Om t-\omega)}\\[4pt]
-\e^{-\ii(\Om t-\omega)} ~& 0
\end{pmatrix},\quad z\in B.
\end{aligned}
\right.
\ee
Thus, we arrive at the following model problem RH$^{mod}$:
\be\label{3.78}
\begin{aligned}
m^{mod+}(x,t;z)&=m^{mod-}(x,t;z)J^{mod},\quad z\in L_5\cup B,\\
m^{mod}(x,t;z)&=I+O(z^{-1}),~\qquad\qquad z\rightarrow\infty.
\end{aligned}
\ee
The solution of RH$^{mod}$ approximates $m^{(5)}(x,t;z)$ as follows (see the parametrix analysis in \cite{RB})
\be\label{3.79}
m^{(5)}(x,t;z)=\big(I+O(t^{-\frac{1}{2}})\big)m^{mod}(x,t;z).
\ee
The model RH problem \eqref{3.78} can be solved in terms of elliptic theta functions. Let us consider the Abelian differential
\berr
\dd u=\frac{c}{w(z)}\dd z,\quad c=\bigg(\oint_{\textsl{b}}\frac{1}{w(z)}\dd z\bigg)^{-1},
\eerr
which is normalized so that $\oint_{\textsl{b}}\dd u=1$ and has Riemann period $\tau$ defined by
\berr
\tau=\oint_{\textsl{a}}\dd u.
\eerr
Note that $c\in\ii\bfR$. It can be shown that $\tau\in\ii\bfR^+$ (see \cite{HF}). Define
\be
U(z)=\int_{\frac{\ii}{2}q_0^2}^z\dd u.
\ee
Then the following relations are valid:
\be
\begin{aligned}
&U^+(z)+U^-(z)=n,~~~\qquad n\in\bfZ,~z\in L_5,\\
&U^+(z)+U^-(z)=-\tau+n,~~ n\in\bfZ,~z\in B.
\end{aligned}
\ee
Next, we define a new function $\nu(z)$ by
\be
\nu(z)=\bigg(\frac{(z-\frac{\ii}{2}q_0^2)(z-\alpha)}{(z+\frac{\ii}{2}q_0^2)(z-\bar{\alpha})}\bigg)^{\frac{1}{4}},
\ee
which has the same jump discontinuity across both $L_5$ and $B$, namely,
\berr
\nu^+(z)=\ii\nu^-(z),\quad z\in L_5\cup B,
\eerr
and admits the large-$z$ asymptotic behavior
\berr
\nu(z)=1+\frac{q_0^2+2\alpha_2}{4\ii z}+O(z^{-2}),\quad z\rightarrow\infty.
\eerr
The last ingredient is the theta function with $\ii\tau<0$:
\berr
\theta_3(z)=\sum_{l\in\bfZ}\e^{\ii\pi\tau l^2+2\ii\pi lz},
\eerr
which has the following properties
\be
\theta_3(-z)=\theta_3(z),~~\theta_3(z+n)=\theta_3(z),~~\theta(z+\tau)=\e^{-\ii\pi\tau-2\ii\pi z}\theta_3(z).
\ee
Now we define the $2\times2$ matrix-valued function $\Theta(z)=\Theta(x,t;z)$ with entries:
\bea
\Theta_{11}(z)&=&\frac{1}{2}\bigg[\nu(z)+\frac{1}{\nu(z)}\bigg]\frac{\theta_3(U(z)+U_0-\frac{\Om t}{2\pi}+\frac{\omega}{2\pi}+\frac{\ii\ln(\frac{\bar{q}_-}{2})}{2\pi})}{\sqrt{\frac{2}{\bar{q}_-}}\theta_3(U(z)+U_0)},\nn\\
\Theta_{12}(z)&=&\frac{1}{2}\ii\bigg[\nu(z)-\frac{1}{\nu(z)}\bigg]\frac{\theta_3(-U(z)+U_0-\frac{\Om t}{2\pi}+\frac{\omega}{2\pi}+\frac{\ii\ln(\frac{\bar{q}_-}{2})}{2\pi})}{-\sqrt{\frac{\bar{q}_-}{2}}\theta_3(-U(z)+U_0)},\nn\\
\Theta_{21}(z)&=&\frac{1}{2}\ii\bigg[\nu(z)-\frac{1}{\nu(z)}\bigg]\frac{\theta_3(U(z)-U_0-\frac{\Om t}{2\pi}+\frac{\omega}{2\pi}+\frac{\ii\ln(\frac{\bar{q}_-}{2})}{2\pi})}{-\sqrt{\frac{2}{\bar{q}_-}}\theta_3(U(z)-U_0)},\nn\\
\Theta_{22}(z)&=&\frac{1}{2}\bigg[\nu(z)+\frac{1}{\nu(z)}\bigg]\frac{\theta_3(-U(z)-U_0-\frac{\Om t}{2\pi}+\frac{\omega}{2\pi}+\frac{\ii\ln(\frac{\bar{q}_-}{2})}{2\pi})}{-\sqrt{\frac{\bar{q}_-}{2}}\theta_3(-U(z)-U_0)},\nn
\eea
where
\be\label{3.86}
U_0=U(z_*)+\frac{1}{2}(1+\tau),\quad z_*=\frac{q_0^2\alpha_1}{q_0^2+2\alpha_2}.
\ee
Then the solution of the model RH problem \eqref{3.78} is given by
\be
m^{mod}(x,t;z)=\Theta^{-1}(x,t;\infty)\Theta(x,t;z).
\ee
Taking into account \eqref{3.45}, \eqref{3.68} and \eqref{3.79}, we get
\bea
q(x,t)&=&-2\lim_{z\rightarrow\infty}(z m(x,t;z))_{12}\nn\\
&=&-2\lim_{z\rightarrow\infty}(z m^{(4)}(x,t;z))_{12}\e^{-2\ii G(\infty)t}+O(t^{-\frac{1}{2}})\nn\\
&=&-2\lim_{z\rightarrow\infty}(z m^{(5)}(x,t;z))_{12}\e^{2\ii(\hat{g}(\infty)-G(\infty)t)}+O(t^{-\frac{1}{2}})\\
&=&-2\lim_{z\rightarrow\infty}(z m^{mod}(x,t;z))_{12}\e^{2\ii(\hat{g}(\infty)-G(\infty)t)}+O(t^{-\frac{1}{2}}).\nn
\eea
However, we have
\be
-2\lim_{z\rightarrow\infty}(z m^{mod}(x,t;z))_{12}=\bigg(q_-+\frac{2\alpha_2}{\bar{q}_-}\bigg)\frac{\theta_3(-U(\infty)+U_0-\frac{\Om t}{2\pi}+\frac{\omega}{2\pi}+\frac{\ii\ln(\frac{\bar{q}_-}{2})}{2\pi})\theta_3(U(\infty)+U_0)}{\theta_3(U(\infty)+U_0-\frac{\Om t}{2\pi}+\frac{\omega}{2\pi}+\frac{\ii\ln(\frac{\bar{q}_-}{2})}{2\pi})\theta_3(-U(\infty)+U_0)},\nn
\ee
where
\be\label{3.89}
U(\infty)=\int_{\frac{\ii}{2}q_0^2}^\infty\dd u.
\ee
Taking into account \eqref{3.67} and \eqref{3.72}, we get the asymptotics of the solution in the region $-2\sqrt{2}q_0^2<\xi<0$.
\begin{theorem}
In the region $-2\sqrt{2}q_0^2t<x<0$, the asymptotics of the solution $q(x,t)$ of the initial value problem for the GI-type derivative NLS equation \eqref{1.2} takes, as $t\rightarrow\infty$, the form of a modulated elliptic wave:
\bea
q(x,t)&=&\bigg(q_-+\frac{2\alpha_2}{\bar{q}_-}\bigg)\frac{\theta_3(-U(\infty)+U_0-\frac{\Om t}{2\pi}+\frac{\omega}{2\pi}+\frac{\ii\ln(\frac{\bar{q}_-}{2})}{2\pi})\theta_3(U(\infty)+U_0)}{\theta_3(U(\infty)+U_0-\frac{\Om t}{2\pi}+\frac{\omega}{2\pi}+\frac{\ii\ln(\frac{\bar{q}_-}{2})}{2\pi})\theta_3(-U(\infty)+U_0)}
\e^{2\ii(\hat{g}(\infty)-G(\infty)t)}\nn\\
&&+O(t^{-\frac{1}{2}}),\quad t\rightarrow\infty,
\eea
where the constants $\alpha_2$, $\Om$, $\omega$, $U(\infty)$, $U_0$, $G(\infty)$ and $\hat{g}(\infty)$ are given by the equations \eqref{3.57}, \eqref{3.64}, \eqref{3.70}, \eqref{3.89}, \eqref{3.86}, \eqref{3.67} and \eqref{3.72}, respectively.
\end{theorem}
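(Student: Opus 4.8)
The plan is to assemble the chain of explicit transformations $m\rightsquigarrow m^{(1)}\rightsquigarrow\cdots\rightsquigarrow m^{(5)}$ constructed above, to replace $m^{(5)}$ by the model solution $m^{mod}$ up to a controlled error, and then to read off $q(x,t)$ from the reconstruction formula \eqref{2.43}.

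\textbf{Step 1: tracking the extraction formula.} I would first record how each transformation acts on the coefficient of $z^{-1}$ at $z=\infty$. The factors $\delta^{-\sigma_3}$, the conjugations \eqref{3.45} by $\e^{\mp\ii G(\infty)t\sigma_3}$ and $\e^{\mp\ii G(z)t\sigma_3}$, and \eqref{3.68} by $\e^{\mp\ii\hat g(\infty)\sigma_3}$ and $\e^{\pm\ii\hat g(z)\sigma_3}$ are all diagonal at infinity, and the $H$-, $\hat H$-type factors are trivial there, so each multiplies $(zm)_{12}$ by an explicit scalar. The net effect is precisely the factor $\e^{2\ii(\hat g(\infty)-G(\infty)t)}$ appearing in the statement, exactly as in the computation immediately preceding the theorem. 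This reduces everything to the analysis of $m^{(5)}$.

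\textbf{Step 2: the small-norm reduction.} Next I would justify \eqref{3.79}. From the sign table of $\mathrm{Re}(\ii g)$ in Fig. \ref{fig6}, the jumps $J^{(5)}_j$ on the lens contours $L_j$, $j=1,\dots,4,8,9$, satisfy $J^{(5)}_j=I+O(\e^{-\epsilon t})$ uniformly away from $z_0$, $\alpha$, $\bar\alpha$, while on $L_5\cup B$ the jump is already the $z$-independent matrix $J^{mod}$. Near $z_0$ one builds the usual parabolic-cylinder local parametrix, and near the soft edges $\alpha,\bar\alpha$ a local parametrix adapted to the $(z-\alpha)^{3/2}$ vanishing of $g$ that was arranged via \eqref{3.59}; together these produce an error of order $O(t^{-1/2})$. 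A standard small-norm argument for the RH problem satisfied by $m^{err}=m^{(5)}(m^{mod})^{-1}$ then yields \eqref{3.79}, following \cite{RB,GB1}.

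\textbf{Step 3: solving and evaluating the model problem.} I would then verify that $m^{mod}=\Theta^{-1}(x,t;\infty)\Theta(x,t;z)$ solves RH$^{mod}$ \eqref{3.78}. Using $\nu^+=\ii\nu^-$ on $L_5\cup B$, the relations $U^++U^-=n$ on $L_5$ and $U^++U^-=-\tau+n$ on $B$, the translation laws $\theta_3(u+n)=\theta_3(u)$ and $\theta_3(u+\tau)=\e^{-\ii\pi\tau-2\ii\pi u}\theta_3(u)$, together with the jumps $g^++g^-=0$ on $L_5$ and $g^++g^-=\Om$ on $B$, one checks that $\Theta$ has exactly the jumps $J^{mod}$. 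The choice $U_0=U(z_*)+\tfrac12(1+\tau)$ with $z_*$ as in \eqref{3.86} must be shown to place the zeros of the denominator theta functions so that $m^{mod}$ is analytic and pole-free on $\bfC\setminus(L_5\cup B)$ (this is where Abel's theorem and the Riemann vanishing theorem enter), and $\Theta(\infty)$ is finite and invertible, giving $m^{mod}(\infty)=I$. Finally $-2\lim_{z\to\infty}(z m^{mod})_{12}$ is extracted from the $O(z^{-1})$ terms of $\nu(z)$ (which contributes the prefactor $q_-+2\alpha_2/\bar q_-$ through the $\tfrac{q_0^2+2\alpha_2}{4\ii z}$ term) and of the functions $\theta_3(\pm U(z)+\cdots)$ at $z=\infty$, producing the displayed ratio of four theta functions. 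Combining Steps 1--3 gives the stated formula. I expect Step 3 to be the main obstacle: fixing the quasi-periodicity so that $\Theta$ is genuinely single-valued and has no spurious poles on $\bfC\setminus(L_5\cup B)$ is exactly what the constant $U_0$ (equivalently $z_*$) and the constant $\omega$ of \eqref{3.70} are designed to ensure, and getting these right is the delicate point; the soft-edge parametrices near $\alpha,\bar\alpha$ in Step 2 are the other technical point, but by now routine.
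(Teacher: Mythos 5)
Your proposal follows essentially the same route as the paper: the same chain of transformations $m\rightsquigarrow m^{(1)}\rightsquigarrow\cdots\rightsquigarrow m^{(5)}$ with the $g$- and $\hat g$-functions supplying the phase factor $\e^{2\ii(\hat g(\infty)-G(\infty)t)}$, the same small-norm/parametrix reduction to \eqref{3.79} (which the paper likewise delegates to \cite{RB,GB1}), and the same theta-function solution of the model problem \eqref{3.78} from which the ratio of four theta functions and the prefactor $q_-+2\alpha_2/\bar q_-$ are extracted. Your identification of the delicate points (pole-freeness of $\Theta$ via the choice of $U_0$ and $\omega$, and the local parametrices at $z_0$, $\alpha$, $\bar\alpha$) is consistent with what the paper leaves to the cited references, so the proposal is correct and matches the paper's argument.
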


\subsection{The plane wave and modulated elliptic wave regions II}
In the rest of the present paper, we devoted to discuss the long-time asymptotics of the solution of the initial value problem for GI-type derivative NLS equation \eqref{1.2} in the plane wave region II: $x>2\sqrt{2}q_0^2t$ and modulated elliptic wave region II: $0<x<2\sqrt{2}q_0^2t$.

We first consider the region $x>2\sqrt{2}q_0^2t$. In this case we first rescale the RH problem \eqref{2.38} for $m(x,t;z)$ as follows, which is motivated by the ideas used in \cite{GB1}. Let
\be
\tilde{m}(x,t;z)=\left\{\begin{aligned}
&m(x,t;z)A(k),\qquad z\in\bfC^+\setminus\gamma,\\
&m(x,t;z)\bar{A}^{-1}(k),\quad z\in\bfC_-\setminus\bar{\gamma},
\end{aligned}\right.
\ee
where
\berr
A(k)=\begin{pmatrix}
a(k) ~& 0\\[4pt]
0 ~& a^{-1}(k)
\end{pmatrix}.
\eerr
Then, $\tilde{m}(x,t;z)$ is analytic in $\bfC\setminus\Sigma$ and satisfies the following jump conditions
\be
\begin{aligned}
&\tilde{m}^+(x,t;z)=\tilde{m}^-(x,t;z)\tilde{J}^{(0)}_1(x,t;z),\quad z\in\bfR,\\
&\tilde{m}^+(x,t;z)=\tilde{m}^-(x,t;z)\tilde{J}^{(0)}_2(x,t;z),\quad z\in\gamma,\\
&\tilde{m}^+(x,t;z)=\tilde{m}^-(x,t;z)\tilde{J}^{(0)}_3(x,t;z),\quad z\in\bar{\gamma},\\
\end{aligned}
\ee
with the normalization condition
\be
\tilde{m}(x,t;z)=I+O(z^{-1}),\quad z\rightarrow\infty,
\ee
where
\berr
\begin{aligned}
\tilde{J}^{(0)}_1(x,t;z)&=\begin{pmatrix}
\frac{1}{d(z)} ~ & -\bar{\varrho}(z)\e^{2\ii t\theta(\xi;z)}\\[4pt]
z\varrho(z)\e^{-2\ii t\theta(\xi;z)} ~& d(z)[1-z\varrho(z)\bar{\varrho}(z)]\\
\end{pmatrix},\\
\tilde{J}^{(0)}_2(x,t;z)&=\begin{pmatrix}
-\frac{\lambda-(z+\frac{1}{2}q_0^2)}{q_+}\bar{\varrho}(z)\e^{2\ii t\theta(\xi;z)}~& -\frac{2\lambda}{z\bar{q}_+}[1-z\varrho(z)\bar{\varrho}(z)]\\[4pt]
\frac{z\bar{q}_+}{2\lambda} ~& \frac{\lambda+z+\frac{1}{2}q_0^2}{\bar{q}_+}\varrho(z)\e^{-2\ii t\theta(\xi;z)}
\end{pmatrix},\\
\tilde{J}^{(0)}_3(x,t;z)&=\begin{pmatrix}
\frac{\lambda+z+\frac{1}{2}q_0^2}{q_+}\bar{\varrho}(z)\e^{2\ii t\theta(\xi;z)} ~& \frac{q_+}{2\lambda}\\[4pt]
-\frac{2\lambda}{q_+}[1-z\varrho(z)\bar{\varrho}(z)] ~& -\frac{\lambda-(z+\frac{1}{2}q_0^2)}{\bar{q}_+}\varrho(z)\e^{-2\ii t\theta(\xi;z)}
\end{pmatrix},
\end{aligned}
\eerr
and
\berr
\varrho(z)=-\frac{\bar{b}(k)}{k\bar{a}(k)}.
\eerr
Then, we have
\be
\varrho^+(z)=\frac{\bar{q}_+}{q_+}\bar{\varrho}(z),\quad z\in\gamma\cup\bar{\gamma}.
\ee
\begin{figure}[htbp]
  \centering
  \includegraphics[width=3in]{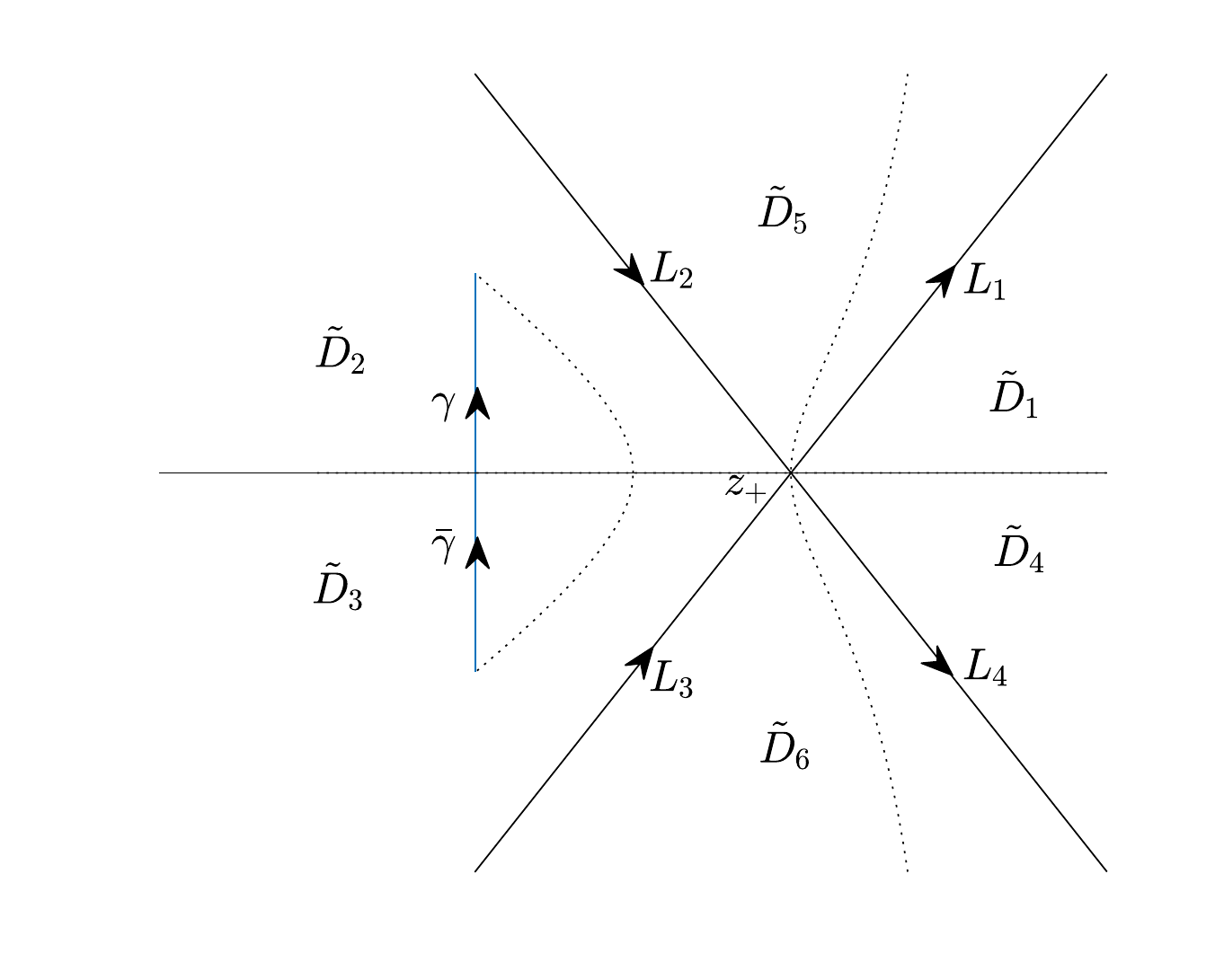}
  \caption{The oriented contour $\Sigma^{(2)}$ and the open sets $\{\tilde{D}_j\}_1^6$ in the plane wave region II.}\label{fig7}
\end{figure}

As the discussions in plane wave region I, the first deformation for $\tilde{m}(x,t;z)$ is
\be
\tilde{m}^{(1)}(x,t;z)=\tilde{m}(x,t;z)\delta^{-\sigma_3}(z),
\ee
where
\be
\delta(z)=\exp\bigg\{\frac{\ii}{2\pi}\int^{\infty}_{z_+}\frac{\ln[1-s\varrho(s)\bar{\varrho}(s)]}{s-z}\dd s\bigg\},\quad z\in\bfC\setminus[z_+,\infty).
\ee
Then $\tilde{m}^{(1)}(x,t;z)$ satisfies the following RH problem
\bea
\tilde{m}^{(1)+}(x,t;z)=\tilde{m}^{(1)-}(x,t;z)\tilde{J}^{(1)}(x,t;z),\quad z\in\Sigma,
\eea
where the jump matrix is given by
\berr
\begin{aligned}
\tilde{J}^{(1)}_1&=\begin{pmatrix}
d^{-\frac{1}{2}} ~ & 0\\[4pt]
z\varrho\e^{-2\ii t\theta}\delta^{-2}d^{\frac{1}{2}} ~& d^{\frac{1}{2}}\\
\end{pmatrix}\begin{pmatrix}
d^{-\frac{1}{2}} ~ & -\bar{\varrho}\e^{2\ii t\theta}\delta^2d^{\frac{1}{2}}\\[4pt]
0 ~& d^{\frac{1}{2}}\\
\end{pmatrix},\qquad~~~~ z\in(-\infty,z_+),\\
\tilde{J}^{(1)}_2&=\begin{pmatrix}
d^{-\frac{1}{2}} ~ & -\frac{\bar{\varrho}\e^{2\ii t\theta}(\delta^-)^2}{1-z\varrho\bar{\varrho}}d^{-\frac{1}{2}}\\[4pt]
0 ~& d^{\frac{1}{2}}\\
\end{pmatrix}\begin{pmatrix}
d^{-\frac{1}{2}} ~ & 0\\[4pt]
\frac{z\varrho\e^{-2\ii t\theta}(\delta^+)^{-2}}{1-z\varrho\bar{\varrho}}d^{-\frac{1}{2}} ~& d^{\frac{1}{2}}\\
\end{pmatrix},\quad z\in(z_+,\infty),\\
\tilde{J}^{(1)}_3&=\begin{pmatrix}
-\frac{\lambda-(z+\frac{1}{2}q_0^2)}{q_+}\bar{\varrho}\e^{2\ii t\theta}~& -\frac{2\lambda\delta^2}{z\bar{q}_+}[1-z\varrho\bar{\varrho}]\\[4pt]
\frac{z\bar{q}_+\delta^{-2}}{2\lambda} ~& \frac{\lambda+z+\frac{1}{2}q_0^2}{\bar{q}_+}\varrho\e^{-2\ii t\theta}
\end{pmatrix},\qquad~~~~~~\qquad z\in\gamma,\\
\tilde{J}^{(1)}_4&=\begin{pmatrix}
\frac{\lambda+z+\frac{1}{2}q_0^2}{q_+}\bar{\varrho}\e^{2\ii t\theta} ~& \frac{q_+\delta^2}{2\lambda}\\[4pt]
-\frac{2\lambda\delta^{-2}}{q_+}[1-z\varrho\bar{\varrho}] ~& -\frac{\lambda-(z+\frac{1}{2}q_0^2)}{\bar{q}_+}\varrho\e^{-2\ii t\theta}
\end{pmatrix},\qquad \qquad~~~z\in\bar{\gamma}.
\end{aligned}
\eerr
The next transformation is
\be
\tilde{m}^{(2)}(x,t;z)=\tilde{m}^{(1)}(x,t;z)\tilde{H}(z),
\ee
where $\tilde{H}(z)$ is defined by
\berr
\tilde{H}(z)=\left\{
\begin{aligned}
&\begin{pmatrix}
d^{\frac{1}{2}} ~ & 0\\[4pt]
-\frac{z\varrho\e^{-2\ii t\theta}\delta^{-2}}{1-z\varrho\bar{\varrho}}d^{-\frac{1}{2}} ~& d^{-\frac{1}{2}}\\
\end{pmatrix},\quad z\in \tilde{D}_1,\\
&\begin{pmatrix}
d^{\frac{1}{2}} ~ & \bar{\varrho}\e^{2\ii t\theta}\delta^2d^{\frac{1}{2}}\\[4pt]
0 ~& d^{-\frac{1}{2}}\\
\end{pmatrix},\qquad\quad\quad z\in \tilde{D}_2,\\
&\begin{pmatrix}
d^{-\frac{1}{2}} ~ & 0\\[4pt]
z\varrho\e^{-2\ii t\theta}\delta^{-2}d^{\frac{1}{2}} ~& d^{\frac{1}{2}}\\
\end{pmatrix},~~\quad\quad z\in \tilde{D}_3,\\
&\begin{pmatrix}
d^{-\frac{1}{2}} ~ & -\frac{\bar{\varrho}\e^{2\ii t\theta}\delta^2}{1-z\varrho\bar{\varrho}}d^{-\frac{1}{2}}\\[4pt]
0 ~& d^{\frac{1}{2}}
\end{pmatrix},~~\quad\quad z\in \tilde{D}_4,\\
& I,\qquad\qquad\qquad\quad\qquad\qquad~~ z\in \tilde{D}_5\cup \tilde{D}_6,
\end{aligned}
\right.
\eerr
and the domains $\{\tilde{D}_j\}_1^6$ are shown in Fig. \ref{fig7}. Then the function $\tilde{m}^{(2)}(x,t;z)$ solves the following equivalent RH problem:
\bea
\begin{aligned}
&\tilde{m}^{(2)+}(x,t;z)=\tilde{m}^{(2)-}(x,t;z)\tilde{J}^{(2)}(x,t;z),\\
&z\in\Sigma^{(2)}=L_1\cup L_2\cup L_3\cup L_4\cup\gamma\cup\bar{\gamma},
\end{aligned}
\eea
where the contours $L_j$ are shown in Fig. \ref{fig7} and the jump matrix $\tilde{J}^{(2)}$ is given by
\berr
\begin{aligned}
\tilde{J}^{(2)}_1&=\begin{pmatrix}
d^{-\frac{1}{2}} ~ & 0\\[4pt]
\frac{z\varrho\e^{-2\ii t\theta}(\delta^+)^{-2}}{1-z\varrho\bar{\varrho}}d^{-\frac{1}{2}} ~& d^{\frac{1}{2}}
\end{pmatrix},~~ z\in L_1,\quad
\tilde{J}^{(2)}_2=\begin{pmatrix}
d^{-\frac{1}{2}} ~ & -\bar{\varrho}\e^{2\ii t\theta}\delta^2d^{\frac{1}{2}}\\[4pt]
0 ~& d^{\frac{1}{2}}\\
\end{pmatrix},~~~ z\in L_2,\\
\tilde{J}^{(2)}_3&=\begin{pmatrix}
d^{-\frac{1}{2}} ~ & 0\\[4pt]
z\varrho\e^{-2\ii t\theta}\delta^{-2}d^{\frac{1}{2}} ~& d^{\frac{1}{2}}
\end{pmatrix},~~~~~ z\in L_3,\quad
\tilde{J}^{(2)}_4=\begin{pmatrix}
d^{-\frac{1}{2}} ~ & -\frac{\bar{\varrho}\e^{2\ii t\theta}(\delta^-)^2}{1-z\varrho\bar{\varrho}}d^{-\frac{1}{2}}\\[4pt]
0 ~& d^{\frac{1}{2}}\\
\end{pmatrix},~z\in L_4,\\
\tilde{J}^{(2)}_5&=\begin{pmatrix}
0 ~& \frac{q_+\delta^2}{\ii\sqrt{z}q_0}\\[4pt]
-\frac{\ii\sqrt{z}q_0}{q_+\delta^2} ~& 0
\end{pmatrix},~~~\qquad z\in \gamma\cup\bar{\gamma}.
\end{aligned}
\eerr
Then, performing the similar deformation and analysis as in Subsection \ref{sec3.1}, we can get a model RH problem which can be solved explicitly and obtain the long-time asymptotics for the solution $q(x,t)$. We just give the main results here.
\begin{theorem}
In the region $x>2\sqrt{2}q_0^2t$, the asymptotics of the solution $q(x,t)$ as $t\rightarrow\infty$ of the initial value problem for the GI-type derivative NLS equation \eqref{1.2} takes the form of a plane wave:
\be
q(x,t)=q_+\e^{2\ii\tilde{\phi}(\xi)}+O(t^{-\frac{1}{2}}),\quad t\rightarrow\infty,
\ee
where $\tilde{\phi}(\xi)$ is given by
\berr
\tilde{\phi}(\xi)=\frac{1}{2\pi}\int_{\gamma\cup\bar{\gamma}}\bigg[\frac{\ln[\frac{2\ii\sqrt{s}}{q_0}]}{\lambda^+(s)}+
\frac{1}{\ii\pi\lambda^+(s)}\int^{\infty}_{z_+}\frac{\ln[1-u\varrho(u)\bar{\varrho}(u)]}{u-s}\dd u\bigg]\dd s.
\eerr
\end{theorem}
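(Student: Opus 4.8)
\smallskip\noindent\textbf{Proof proposal.} The scheme parallels Subsection~\ref{sec3.1}, applied now to the rescaled matrix $\tilde m^{(2)}(x,t;z)$ whose jump $\tilde J^{(2)}$ has just been recorded. The preliminary conjugation by $A(k)=\mathrm{diag}(a(k),a^{-1}(k))$ is legitimate because $a(k)\neq0$ on $\{\mathrm{Im}\,k^2>0\}\cup\Sigma$; its effect is to move the reciprocal factor $1-z\varrho(z)\bar\varrho(z)$ from the $(1,1)$ entry of the jump on $\bfR$ (where it sat in the plane wave region~I) to the $(2,2)$ entry, which interchanges the roles of the two rays issuing from the stationary point and reverses the direction of the $\delta$- and $F$-conjugations relative to Subsection~\ref{sec3.1} — this is exactly what produces the signs in $\tilde\phi(\xi)$ as opposed to $\phi(\xi)$ of \eqref{3.25}, and ultimately the sign of the exponent in the conclusion. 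For $\xi>2\sqrt2\,q_0^2$ the phase $\theta(\xi;z)$ still has the two real stationary points $z_\pm$ of \eqref{3.3}, now both positive, and the contours $L_1,\dots,L_4$ are drawn through $z_+$ inside the sectors where $\mathrm{Re}(\ii\theta)$ has the sign needed to make the off-diagonal jumps decay.

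First I would strip off the scalar factor $d(z)$ from $L_1,\dots,L_4$ via $\tilde m^{(3)}=\tilde m^{(2)}\hat H(z)$, with $\hat H=I$ on $\tilde D_1,\dots,\tilde D_4$ and $\hat H=d^{\pm\sigma_3/2}$ on $\tilde D_5,\tilde D_6$; since $d(\infty)=1$ the normalization at $z=\infty$ is untouched, and afterwards the jump on each $L_j$, $j=1,\dots,4$, is $I+O(\e^{-\epsilon t})$ uniformly away from $z_+$, while the jump across $\gamma\cup\bar\gamma$ is the off-diagonal matrix $\begin{pmatrix}0 & \frac{q_+\delta^2}{\ii\sqrt z\,q_0}\\[4pt]-\frac{\ii\sqrt z\,q_0}{q_+\delta^2} & 0\end{pmatrix}$. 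Next I would introduce a scalar function $F(z)$, analytic off $\gamma\cup\bar\gamma$ and normalized by $\ln F(z)/\lambda(z)=O(z^{-1})$ at $\infty$, solving the multiplicative relation $F^+(z)F^-(z)=\frac{2\ii\sqrt z}{q_0}\delta^{-2}(z)$ on $\gamma\cup\bar\gamma$ (the analogue of \eqref{3.20}, but with $\delta$ now built from the integral over $[z_+,\infty)$); dividing by $\lambda^+(z)$ and applying Plemelj's formula gives $F$ explicitly, and evaluation at $z=\infty$ yields $F(\infty)=\e^{\ii\tilde\phi(\xi)}$ with $\tilde\phi$ as stated. A final conjugation by $F^{\sigma_3}(z)$ together with the constant $F^{\sigma_3}(\infty)$ — with the inverses placed so as to reproduce the effect of the $A(k)$-rescaling — then produces a RH problem whose jump across $\gamma\cup\bar\gamma$ is the constant $J^{mod}=\begin{pmatrix}0 & 2/\bar q_+\\[4pt]-\bar q_+/2 & 0\end{pmatrix}$ and whose jump on each $L_j$ is $I+O(\e^{-\epsilon t})$.

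Finally I would write $\tilde m^{(4)}=m^{err}m^{mod}$, where $m^{mod}$ solves the constant-jump model problem; this is solved explicitly in terms of the function $\nu(z)=\bigl(\frac{z-\frac{\ii}{2}q_0^2}{z+\frac{\ii}{2}q_0^2}\bigr)^{1/4}$ exactly as in Subsection~\ref{sec3.1}, and yields $-2\lim_{z\to\infty}(z\,m^{mod})_{12}=q_+$. Substituting into \eqref{2.43} and carrying the constant factors through the chain $\tilde m\rightsquigarrow\tilde m^{(4)}$ (using $A(k),d(z),\delta(z)\to I$ as $z\to\infty$, and noting that, in contrast to Subsection~\ref{sec3.1}, the rescaling makes the net $F(\infty)$-power equal $+2$) gives $q(x,t)=q_+\e^{2\ii\tilde\phi(\xi)}+O(t^{-1/2})$. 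The genuinely technical ingredient, identical to the one in the plane wave region~I, is the justification of the error estimate \eqref{3.33}: near the real stationary point $z_+$ one constructs a local parametrix from parabolic-cylinder functions, matches it to $m^{mod}$ on a small circle about $z_+$, and proves a small-norm estimate for the residual error RH problem, which I would take over from \cite{RB,GB1} rather than reproduce. I expect the main obstacle to be purely bookkeeping: tracking the sign reversals induced by the $A(k)$-rescaling consistently through every transformation, so that the exponent in the final formula comes out as $+2\ii\tilde\phi(\xi)$ rather than its negative.
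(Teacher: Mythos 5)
Your outline does follow the same route the paper intends for this theorem (the paper itself only sets up the $A(k)$-rescaling, the new $\delta$ on $[z_+,\infty)$ and the lens opening, and then says "perform the similar deformation and analysis as in Subsection~\ref{sec3.1}"); stripping $d$, constructing $F$ from $F^+F^-=\frac{2\ii\sqrt z}{q_0}\delta^{-2}$ on $\gamma\cup\bar\gamma$, passing to the constant-jump model problem solved by $\nu$, and importing the parabolic-cylinder parametrix estimate from \cite{RB,GB1} is exactly what is meant. So the architecture of your proposal is fine.

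The genuine gap is the one step you explicitly wave at: the sign of the exponent. You claim the $A(k)$-rescaling "reverses the direction of the $\delta$- and $F$-conjugations" and thereby makes "the net $F(\infty)$-power equal $+2$". This mechanism does not work as stated. Since $a(k)\to1$ as $k\to\infty$, the conjugation by $A(k)$ drops out of the reconstruction formula \eqref{2.43}; and the direction of the $F$-conjugation is not a free choice one can "reverse": to turn the jump $\begin{pmatrix}0 & \frac{q_+\delta^2}{\ii\sqrt z q_0}\\ -\frac{\ii\sqrt z q_0}{q_+\delta^2} & 0\end{pmatrix}$ on $\gamma\cup\bar\gamma$ into a constant matrix one is forced to use exactly the factorization \eqref{3.19} (with $q_-$ replaced by $q_+$) and the conjugation $\tilde m^{(4)}=F^{\sigma_3}(\infty)\tilde m^{(3)}F^{-\sigma_3}(z)$, which again produces the factor $F^{-2}(\infty)$ in the reconstruction, just as in region I. What actually changes in region II is $\delta$ itself: with the paper's definition (prefactor $\frac{\ii}{2\pi}$, integral over $[z_+,\infty)$) it satisfies $\delta^-=\delta^+[1-z\varrho\bar\varrho]$, the reverse of the region-I relation, and this flips the sign of the reflection-coefficient term inside $\ln F(\infty)$; if one then carries the region-I conventions (orientation of $\gamma\cup\bar\gamma$, hence the meaning of $\lambda^+$ and $F^\pm$) over verbatim, one finds $F(\infty)=\e^{\ii\tilde\phi}$ and hence $q\approx q_+F^{-2}(\infty)=q_+\e^{-2\ii\tilde\phi}$, i.e.\ the opposite exponent to the one you are trying to prove. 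Reconciling this with the stated formula requires redoing the orientation/boundary-value conventions on $\gamma\cup\bar\gamma$ and on the lens contours for the mirrored geometry of Fig.~\ref{fig7}, not invoking the rescaling. Since this sign is the only substantive content of the theorem beyond a verbatim repetition of Subsection~\ref{sec3.1}, deferring it to "bookkeeping" — and justifying it by an argument that cannot produce it — leaves the proof incomplete at precisely the point it needed to be done.
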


For the modulated elliptic wave region II: $0<x<2\sqrt{2}q_0^2t$, the asymptotic analysis is entirely analogous to Subsection \ref{sec3.2} after the rescaling the RH problem for $m(x,t;z)$ as stated above. We omit the detailed computation here.

{\bf Acknowledgments.}

This work was supported in part by the National Natural Science Foundation of
China under grants 11731014, 11571254 and 11471099.
\medskip
\small{

}
\end{document}